\def\widthhs{4}
\def\widthh{5}
\author{Quentin M\'erigot \and \'Edouard Oudet} \title{Handling
  convexity-like constraints in variational problems}
\begin{document}

\begin{abstract}
  We provide a general framework to construct finite dimen\-sio\-nal
  approximations of the space of convex functions, which also applies
  to the space of $c$-convex functions and to the space of support
  functions of convex bodies. We give precise estimates of the
  distance between the approximation space and the admissible
  set. This framework applies to the approximation of convex functions
  by piecewise linear functions on a mesh of the domain and by other
  finite-dimensional spaces such as tensor-product splines.  We show
  how these discretizations are well suited for the numerical
  solution of problems of calculus of variations under convexity
  constraints. Our implementation relies on proximal algorithms, and
  can be easily parallelized, thus making it applicable to large scale
  problems in dimension two and three.  We illustrate the versatility
  and the efficiency of our approach on the numerical solution of
  three problems in calculus of variation : 3D denoising, the
  principal agent problem, and optimization within the class of convex
  bodies.
\end{abstract}

\maketitle

\section{Introduction}

Several problems in the calculus of variations come with natural
convexity constraints. In optimal transport, Brenier theorem asserts
that every optimal transport plan can be written as the gradient of a
convex function, when the cost is the squared Euclidean distance.
Jordan, Kinderlehrer and Otto showed \cite{jordan1998variational} that
some evolutionary PDEs such as the Fokker-Planck equation can be
reformulated as a gradient flow of a functional in the space of
probability densities endowed with the natural distance constructed
from optimal transport, namely the Wasserstein space. In the
corresponding time-discretized schemes, each timestep involves the
solution of a convex optimization problem over the set of gradient
of convex functions. In a different context, the principal agent
problem proposed by Rochet and Choné \cite{rochet1998ironing} in
economy also comes with natural convexity constraints.  Despite the
possible applications, the numerical implementation of these
variational problems has been lagging behind, mainly because of a
non-density phenomenon discovered by Chon\'e and Le~Meur
\cite{chone2001non}.

\begin{figure}[t]
\centering

\begin{tabular}{c c}
\includegraphics[height=\widthhs cm]{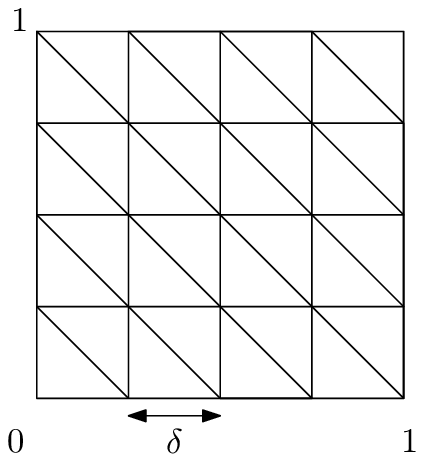}&
\includegraphics[height=\widthhs cm]{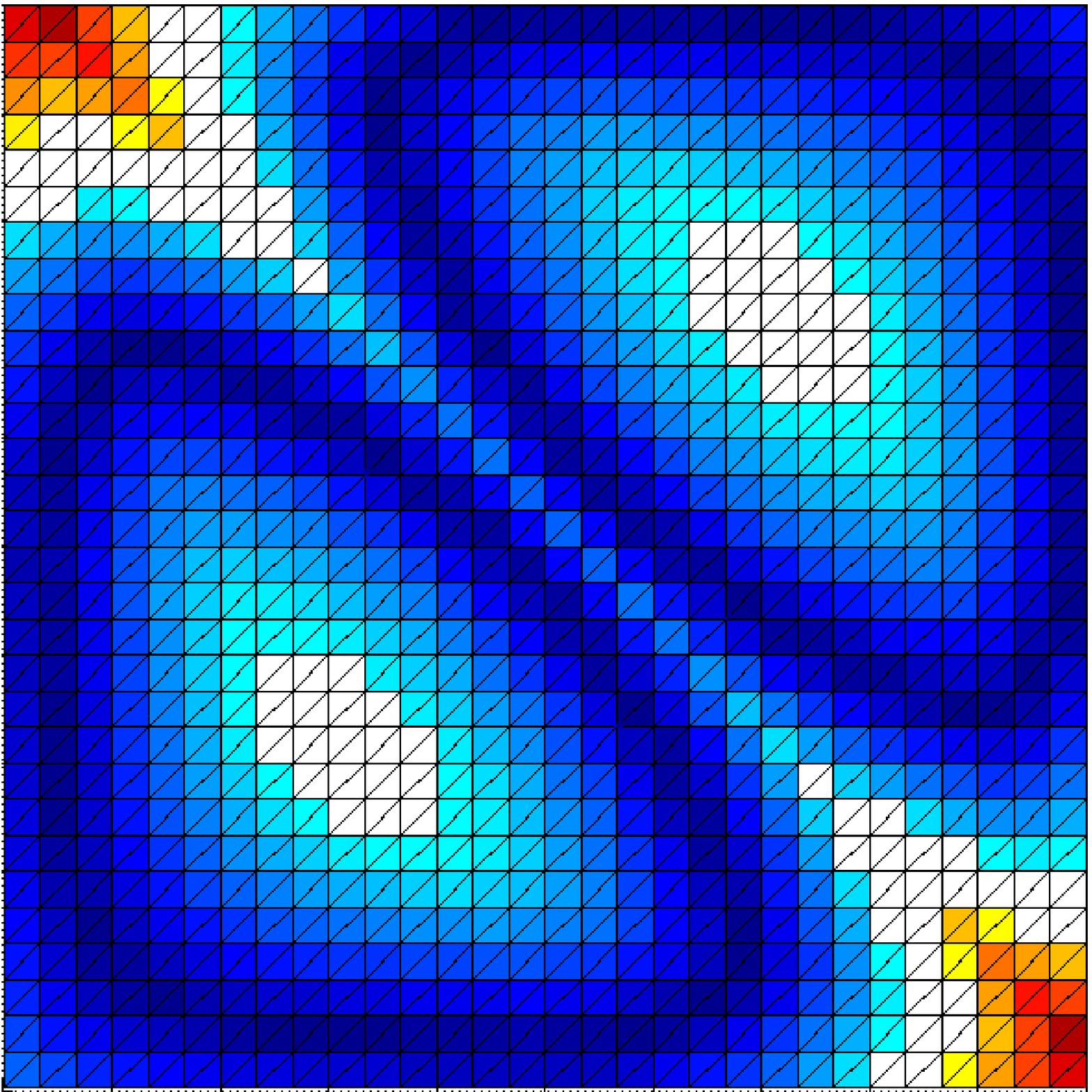} \\
\includegraphics[height=\widthhs cm]{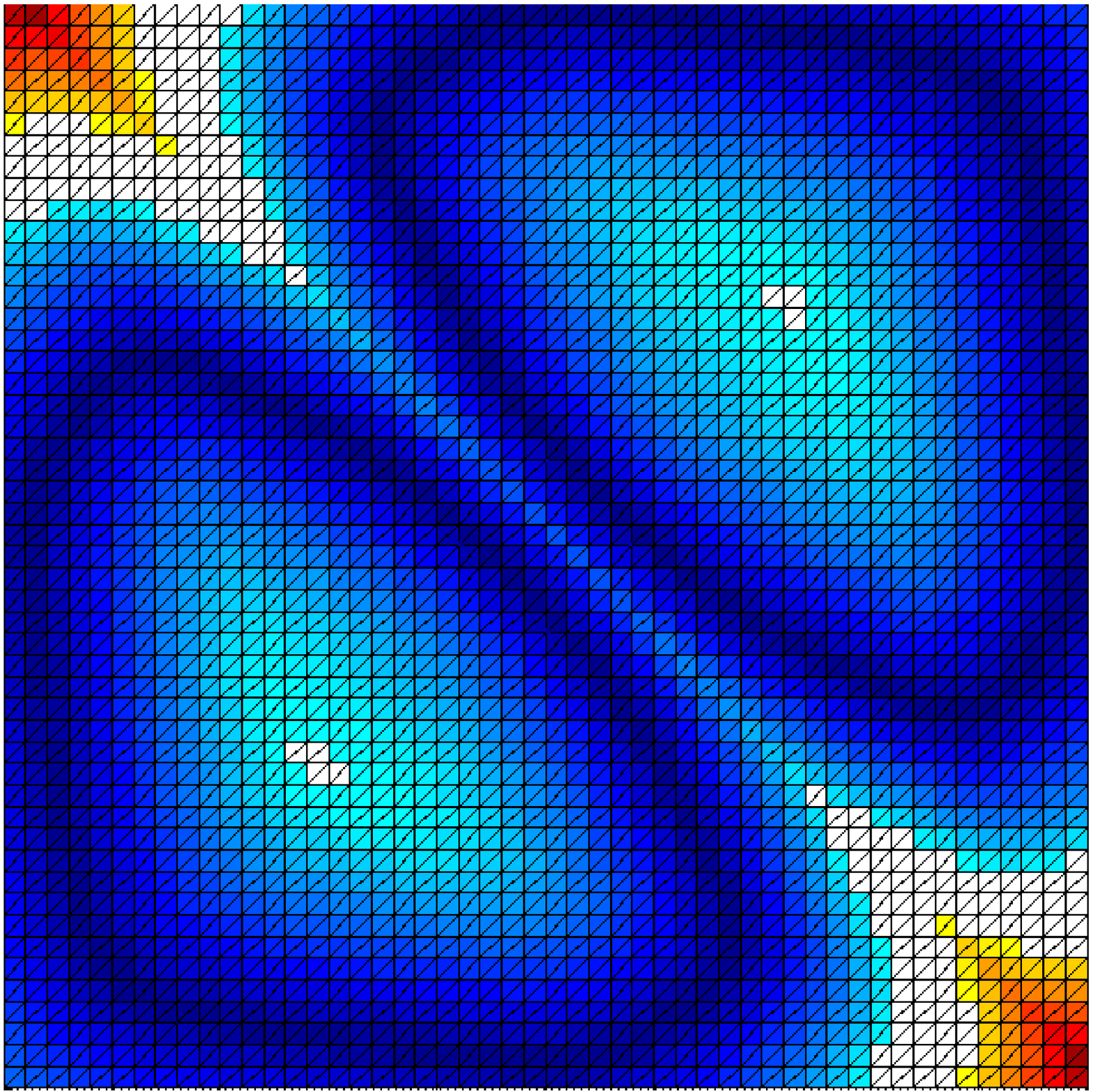}&
\includegraphics[height=\widthhs cm]{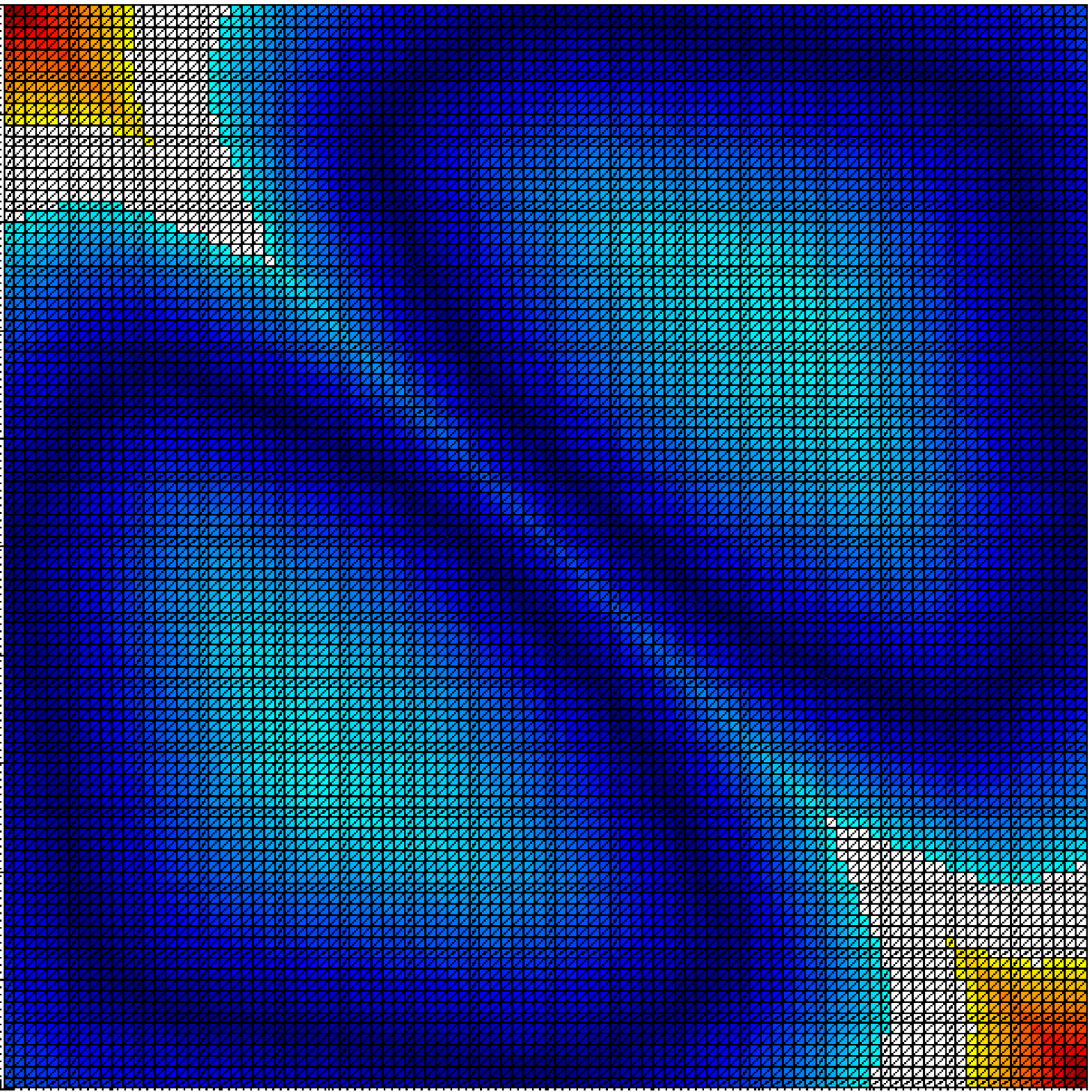}\\
\end{tabular}

\label{fig:grid}
\label{fig:chone}

\caption{Illustration of the non-density result phenomena of Chon\'e
  and Le Meur on the grid $G_\delta$ \emph{(top left)}.  We consider
  the convex function $f(x,y) = \max(0, x+y-1)$ on $[0,1]^2$ and its
  projection $g$ on the intersection $\H_{M_\eps} \cap E_\delta$,
  where $E_\delta$ is the space of piecewise linear functions on
  $G_\delta$ and $\H_{M_\eps}$ is the space of function satisfying the
  relaxed convexity constraints of
  Definition~\ref{def:discretization}. The error $\abs{f-g}$ is
  displayed for three different choices of grid size, and $\eps \ll
  \delta$. One can observe that the maximum error $\nr{f-g}_\infty$
  remains almost constant regardless of $\delta$. (In all figures, the
  upper left and lower right corners corresponds to the value $0.2$.)}
\end{figure}

Chon\'e and Le~Meur discovered that some convex functions cannot be
approximated by piecewise-linear convex functions on a regular grid
(such as the grid displayed in Figure~\ref{fig:grid}).  More
precisely, they proved that piecewise-linear convex functions on the
regular grid automatically satisfy the inequality $\frac{\partial^2
  f}{\partial x \partial y} \geq 0$ in a the sense of distributions.
Since there exists convex functions that do not satisfy this
inequality, this implies that the union of the spaces of
piecewise-linear convex functions on the regular grids
$(G_\delta)_{\delta > 0}$ is not dense in the space of convex
functions on the unit square. Moreover, this difficulty is local, and
it is likely that for any fixed sequence of meshes, one can construct
convex functions $f$ that cannot be obtained as limits of
piecewise-linear convex functions on these meshes. This phenomenon
makes it challenging to use P$^1$ finite elements to approximate the
solution of variational problems with convexity constraints.

\subsection{Related works} In this section, we briefly discuss  approaches
that have been proposed in the last decade to tackle the problem
discovered by Chon\'e and Le~Meur.

\subsubsection*{Mesh versus grid constraints}
Carlier, Lachand-Robert and Maury proposed in
\cite{carlier2001numerical} to replace the space of P$^1$ convex
functions by the space of the space of \emph{convex interpolates}. For
every fixed mesh, a piecewise linear function is a convex interpolate
if it is obtained by linearly interpolating the restriction of a
convex function to the node of the mesh. Note that these functions are
not necessarily convex, and the method is therefore not
interior. Density results are straightforward in this context but the
number of linear constraints which have to be imposed on nodes values
is rather large. The authors observe that in the case of a regular
grid, one needs $\simeq m^{1.8}$ constraints in order to describe the
space of convex interpolates, where $m$ stands for the number of nodes
of the mesh.

Aguilera and Morin \cite{aguilera2008approximating} proposed a
finite-difference approximation of the space of convex functions using
discrete convex Hessians. They prove that it is possible to impose
convexity by requiring a linear number of nonlinear constraints with
respect to the number of nodes. The leading fully nonlinear
optimization problems are solved using semidefinite programming
codes. Whereas, convergence is proved in a rather general setting, the
practical efficiency of this approach is limited by the capability of
semidefinite solvers. In a similar spirit, Oberman
\cite{oberman2013numerical} considers the space of function that
satisfy local convexity constraints on a finite set of directions. By
changing the size of the stencil, the author proposed different
discretizations which lead to exterior or interior
approximations. Estimates of the quality of the approximation are given
for smooth convex functions.

\subsubsection*{Higher order approximation by convex tensor-product splines}
An important number of publications have been dedicated in recent
years to solve shape preserving least square problems. For instance,
different sufficient conditions have been introduced to force the
convexity of the approximating functions. Whereas this problem is well
understood in dimension one, it is still an active field of research
in the context of multivariate polynomials like B\'ezier or tensor
spline functions. We refer the reader to J{\"u}ttler \cite{juttler}
and references therein for a detailed description of recent
results. In this article, J{\"u}ttler describes an interior
discretization of convex tensor-product splines. This approach is
based on the so called ``Blossoming theory'' which makes it possible
to linearize constraints on the Hessian matrix by introducing
additional variables. Based on this framework, the author illustrates
the method by computing the $L^2$ projection of some given function
into the space of convex tensor-product splines. Two major
difficulties have to be pointed out. First, the density of convex
tensor splines in the space of convex functions is absolutely non
trivial, and one may expect phenomena similar to those discovered by
Chon\'e and Le~Meur. Second, the proposed algorithm leads to a very
large number of linear constraints.

\subsubsection*{Dual approaches}
Lachand-Robert and Oudet \cite{lachand2005minimizing} developed a
strategy related to the dual representation of a convex body by its
support functions. They rely on a simple projection approach that
amounts to the computation of a convex hull, thus avoiding the need to
describe the constraints defining the set of support functions. To the
best of our knowledge, this article is the first one to attack the
question of solving problems of calculus of variations within convex
bodies. The resulting algorithm can be interpreted as a non-smooth
projected gradient descent, and gave interesting results on difficult
problems such as Newton's or Alexandrov's problems. In a similar
geometric framework, Oudet studied in \cite{oudet2013shape}
approximations of convex bodies based on Minkowski sums. It is well
known in dimension two that every convex polygon can be decomposed as
a finite sum of segments and triangles. While this result cannot be
generalized to higher dimension, this approach still allows the
generation of random convex polytopes. This process was used by the
author to study numerically two problems of calculus of variations on
the space of convex bodies with additional width constraints.

Ekeland and Moreno-Bromberg \cite{ekeland2010algorithm} proposed a
dual approach for parameterizing the space of convex functions on a
domain. Given a finite set of points $S$ in the domain, they
parameterize convex functions by their value $f_s$ \emph{and their
  gradient} $v_s$ at those points. In order to ensure that these
couples of values and gradients $(f_s, v_s)_{s\in S}$ are induced by a
convex function, they add for every pair of points in $S$ the
constraints $f_t \geq f_s + \sca{t-s}{v_s}$. This discretization is
interior, and it is easy to show that the phenomenon of Choné and Le
Meur does not occur for this type of approximation. However, the high
number of constraints makes it difficult to solve large-scale problems
using this approach. Mirebeau \cite{mirebeau2013} is currently
investigating an adaptative version of this method that would allow
its application to larger problems.

\subsection{Contributions}
We provide a general framework to construct approximations of the
space of convex functions on a bounded domain that satisfies a Lipschitz
bound. Our approximating space is a finite-dimensional polyhedron,
that is a subset of a finite-dimensional functional space that satisfies
a finite number of linear constraints. The main theoretical
contribution of this article is a bound on the (Hausdorff) distance
between the approximating polyhedron and the admissible set of convex
functions, which is summarized in Theorem~\ref{th:dh}. This bound implies in particular the density of the discretized space of functions in the space of convex functions. Our
discretization is not specific to approximation by piecewise linear
functions on a triangulation of the domain, and can easily be extended
to approximations of convex functions within other
finite-dimensional subspaces, such as the space of tensor product
splines. This is illustrated numerically in
Section~\ref{pagent_section}.

This type of discretization is well suited to the numerical solution
of problems of calculus of variations under convexity constraints. For
instance, we show how to compute the $\LL^2$ projection onto the
discretized space of convex functions in dimension $d=2,3$ by
combining a proximal algorithm~\cite{bauschke2011fixed} and an
efficient projection operator on the space of $1$D discrete convex
functions. Because of the structure of the problem, these $1$D
projection steps can be performed in parallel, thus making our
approach applicable to large scale problems in higher dimension. We
apply our non-smooth approach to a denoising problem in dimension
three in Section \ref{denoising-section}. Other problems of calculus
of variations under convexity constraints, such as the principal-agent
problem, can be solved using variants of this algorithm. This aspect
is illustrated in Section \ref{pagent_section}.

Finally, we note in Section~\ref{sec:generalization} that the
discretization of the space of convex functions we propose can be
generalized to other spaces of functions satisfying similar
constraints, such as the space of support functions of convex
bodies. The proximal algorithm can also be applied to this modified
case, thus providing the first method able to approximate the
projection of a function on the sphere onto the space of support
functions of convex bodies. Section~\ref{sec:constant} presents
numerical computations of $\LL^p$ projections (for $p=1,2,\infty$) of
the support function of a unit regular simplex onto the set of support
functions of convex bodies with constant width. We believe that these
projection operators could be useful in the numerical study of a
famous conjecture due to Bonnesen and Fenchel (1934) concerning
Meissner's convex bodies.

\subsection*{Acknowledgements} The first named author would like to
thank Robert McCann for introducing him to the principal-agent problem
and Young-Heon Kim for interesting discussions. The authors would like
to acknowledge the support of the French Agence National de la
Recherche under references ANR-11-BS01-014-01 (TOMMI) and
ANR-12-BS01-0007 (Optiform).

\subsection*{Notation} Given a metric space $X$, we denote $\C(X)$ the
space of bounded continuous functions on $X$ endowed with the norm of
uniform convergence $\nr{.}_\infty$. Every subset $L$ of the space of
affine forms on $\C(X)$ defines a convex subset $\H_L$ of the space of
continuous functions by duality, denoted by:
\begin{equation}
\label{eq:relax}
\H_L := \{ g \in \C(K);~ \forall \ell \in L,~ \ell(g) \leq 0 \}.
\end{equation}

\section{A relaxation framework for convexity}
\label{sec:relax}

In this section, we concentrate on the relaxation of the standard
convexity constraints for clarity of exposition. Most of the
propositions presented below can be extended to the the
generalizations of convexity presented in
Section~\ref{sec:generalization}. Let $X$ be a bounded convex domain
of $\Rsp^d$, and let $\H$ be the set of continuous convex functions on
$X$.  We define $L_k$ as the set of linear forms $\ell$ on $\C(K)$
which can be written as
\begin{equation}
\ell(g) = g\left(\sum_{i=1}^k \lambda_i x_i\right) - \left(\sum_{i=1}^k \lambda_i g(x_i)\right),
\label{eq:convform}
\end{equation}
where $x_1,\hdots,x_k$ are $k$ distinct points chosen in $X$, and
where $(\lambda_i)_{1\leq i\leq k}$ belongs to the $(k-1)$-dimensional
simplex $\Delta^{k-1}$. In other words, $\lambda_1,\hdots,\lambda_k$
are non-negative numbers whose sum is equal to one. Since we are only
considering continuous functions, $\H_{L_k}$ and $\H$
coincide as soon as $k\geq 2$.

We introduce in Definition~\ref{def:discretization} below a
discretization $M_\eps$ of the set of convexity constraints
$L_2$. Chon\'e and Le Meur proved in \cite{chone2001non} that the
union of the spaces of piecewise-linear convex functions on regular
grids of the square is \emph{not dense} in the space of convex
functions on this domain. This means that we need to be very careful
in order to apply the convexity constraints $M_\eps$ to
finite-dimensional spaces of functions. If one considers the space
$E_\delta$ of piecewise-linear functions on a triangulation of the
domain with edgelengths bounded by $\delta$, then $\H_{M_\eps} \cap
E_\delta = \H \cap E_\delta$ as soon as $\eps \ll \delta$. In this
case, one can fall in the pitfall identified by Chon\'e and Le Meur,
as illustrated in Figure~\ref{fig:chone}. Our goal in this section is
to show that it is possible to choose $\eps$ as a function of $\delta$
so that $\H_{M_\eps} \cap E_\delta$ becomes dense in $\H$ as $\delta$
goes to zero. Before stating our main theorem, we need to introduce
some definitions.

\begin{figure}
\centering
\includegraphics[width=5.7cm]{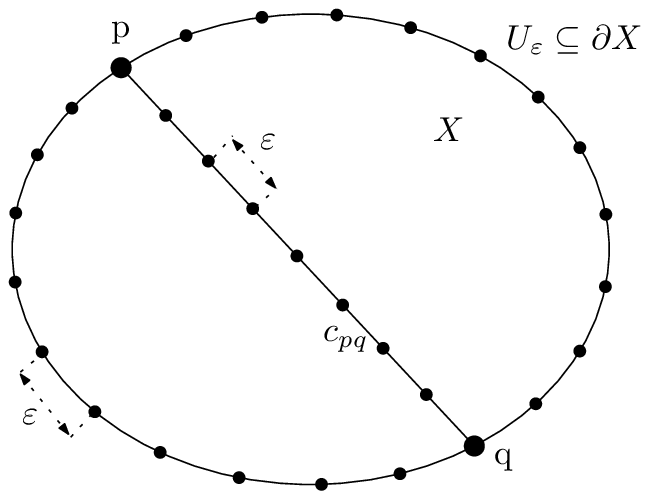}~~
\includegraphics[width=5.7cm]{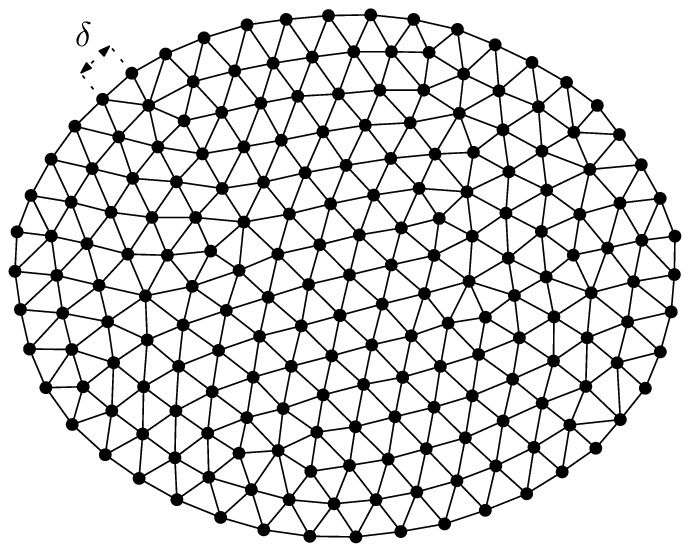}
\caption{\emph{(Left)} The discretized convexity constraints are
  enforced on the discrete segments $c_{pq}$ joining pairs of points of
  an $\eps$-sampling of the boundary $U_\eps \subseteq \partial
  X$. \emph{(Right)} A triangulation of the domain $X$, which can be
  used to define a finite-dimensional subspace $E_\delta$ of
  continuous piecewise-linear functions on $X$. Note that $\eps$ must be greater than $\delta$. The relation between the parameters $\delta$ and $\eps$ is
  studied in Theorems~\ref{th:main}--\ref{th:dh}. \label{fig:disc}}
\end{figure}

\begin{definition}[Discretized convexity constraints]
\label{def:discretization}
  Given any triple of points $(x,y,z)$ in $X$ such that $z$ belongs to
  $[x,y]$, we define a linear form $\ell_{xyz}$ by
\begin{equation} 
\ell_{xyz}(g) := g(z) - \frac{\nr{zy}}{\nr{xy}} g(x) - \frac{\nr{xz}}{\nr{xy}} g(x).
\end{equation}
Here and below, we set $\nr{xy} := \nr{x-y}$ to emphasize the notion
of distance between $x$ and $y$.  By convention, when we write
$\ell_{xyz}$, we implicitly assume that $z$ lies on the segment
$[x,y]$.  Consider a subset $U_\eps\subseteq \partial X$ such that for
every point $x$ in $\partial X$ there exists a point $x_\eps$ in
$U_\eps$ with $\nr{x - x_\eps} \leq \eps$ (see
Figure~\ref{fig:disc}). Given any pair of distinct points
$(p,q)$ in $U_\eps$, we let $c_{pq}$ be the discrete segment defined
by
$$c_{pq} := \left\{ p + \eps i
\frac{(q-p)}{\nr{q-p}}; i \in \Nsp,\, 0 \leq i \leq \nr{q-p}/\eps \right\}.$$
Finally, we define the following discretized 
set of constraints
\begin{equation}
M_\eps := \left\{ 
\ell_{xyz};~ x,y,z \in c_{pq} \hbox{ for some } p,q \in U_\eps \hbox{ and } z\in [x,y] \right\}.
\end{equation}
\end{definition}

\begin{definition}[Interpolation operator]
\label{def:interpolation}
  A \emph{(linear) interpolation operator} is a continuous linear map
  $\I_\delta$ from the space $\C(X)$ to a finite-dimensional linear
  subspace $E_\delta \subseteq \C(X)$, whose restriction to $E_\delta$
  is the identity map. We assume that the space $E_\delta$ contains
  the affine functions on $X$, and that the linear interpolation
  operator $I_\delta$ satisfies these properties:
\begin{align*}
\Lip(\I_\delta f) &\leq C_I \Lip f  \tag{L1} \label{eq:L1} \\
 \nr{f - \I_\delta f}_\infty &\leq \delta \Lip(f), 
\tag{L2} \label{eq:L2} \\
 \nr{f - \I_\delta f}_\infty &\leq \frac{1}{2} \delta^2 \Lip(\nabla f), 
\tag{L3} \label{eq:L3} 
\end{align*}
In practice, we consider families of linear interpolation operators
parameterized by $\delta$, and \emph{we assume that $C_I\geq 1$ is constant
  for the whole family.}
\end{definition}

\begin{example}
  Consider a triangulation of a polyhedral domain $X$, such that each
  triangle has diameter at most $\delta$. Define $E_\delta$ as the
  space of functions that are linear on the triangles of the mesh, and
  $\I_\delta(f)$ as the linear interpolation of $f$ on the mesh. Then
  $(E_\delta,I_\delta)$ is an interpolation operator and satisfies
  \eqref{eq:L1}--\eqref{eq:L3}. Other interpolation operators can be
  derived from higher-order finite elements, tensor-product splines,
  etc.
\end{example}

\begin{definition}[Superior limit of sets]
  The superior limit of a sequence of subsets $(A_n)$ of $\C(X)$ is
  defined by
$$ \overline{\lim}_{n\to \infty} A_n :=
\{ f \in \C(X);\exists f_n \in A_n, s.t. \lim_{n\to\infty} f_n = f \}.$$
\end{definition}

The following theorem shows that the non-density phenomenom identified
by Choné and Le Meur doesn't occur when $\eps$ is chosen large enough,
as a function of $\delta$. This theorem is a corollary of the more
quantitative Theorem~\ref{th:dh}. The remainder of this section is
devoted to the proof of Theorem~\ref{th:dh}.

\begin{theorem}
\label{th:main}
Let $X$ be a bounded convex domain $X$ and $(I_\delta)_{\delta > 0}$
be a family of linear interpolation operators. Let $f$ be a function
from $\Rsp_+$ to $\Rsp_+$ s.t.
$$\lim_{\delta \to 0} f(\delta) = 0
\qquad \lim_{\delta\to 0} \delta/f(\delta)^2 = 0.$$ We let
$\B_\Lip^\gamma$ denotes the set of $\gamma$-Lipschitz functions on
$X$. Then,
\begin{equation}
  \H\cap \B_\Lip^{\gamma/C_I} \subseteq \left[\overline{\lim}_{\delta \to 0} \H_{M_{f(\delta)}} \cap \B_\Lip^\gamma\right] \subseteq 
  \H \cap \B_\Lip^\gamma.
\end{equation}
\end{theorem}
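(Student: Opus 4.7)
The plan is to prove the two inclusions separately; the left one is the substantive direction. For the right inclusion, any uniform limit $g$ of $\gamma$-Lipschitz functions is itself $\gamma$-Lipschitz. For convexity, fix $x, y \in X$ and $z = \lambda x + (1-\lambda) y$; extending $[x,y]$ to a chord of $X$, replacing its endpoints by $f(\delta)$-close points of $U_{f(\delta)}$, and discretising produces $x_\delta, y_\delta, z_\delta \in c_{p_\delta q_\delta}$ converging to $x, y, z$ with $z_\delta \in [x_\delta, y_\delta]$. The constraint $\ell_{x_\delta y_\delta z_\delta}(g_\delta) \leq 0$ then passes to the limit using uniform convergence together with the Lipschitz bound on the $g_\delta$.

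For the left inclusion I would build, for each $u \in \H \cap \B_\Lip^{\gamma/C_I}$, an approximant by a three-step regularisation followed by interpolation. First extend $u$ to a small neighbourhood of $X$ while preserving convexity and Lipschitz constant (for instance via its Lipschitz convex envelope). Second, mollify at scale $\eta$: the result $u_\eta$ is smooth, convex, $\gamma/C_I$-Lipschitz, and satisfies $\nr{u - u_\eta}_\infty \leq C\eta$ and $\Lip(\nabla u_\eta) \leq C\gamma/(C_I\eta)$. Third, mix with a quadratic to force strict convexity,
\[
\tilde u_\eta := (1 - c\sigma)\, u_\eta + \sigma \nr{\cdot}^2 \quad \text{with} \quad c := 2\operatorname{diam}(X)\, C_I/\gamma,
\]
so that $\tilde u_\eta$ is still $\gamma/C_I$-Lipschitz while having Hessian $\geq 2\sigma I$. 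Take $u_\delta := \I_\delta \tilde u_\eta$: property~\eqref{eq:L1} yields $\Lip(u_\delta) \leq \gamma$, and \eqref{eq:L3} yields $\nr{\tilde u_\eta - u_\delta}_\infty \leq C\delta^2/\eta$.

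The critical step is to verify $u_\delta \in \H_{M_{f(\delta)}}$. For any distinct triple $(x,y,z)$ on a discrete segment $c_{pq}$ with $z \in [x,y]$, the two inner distances satisfy $\nr{xz}, \nr{zy} \geq f(\delta) =: \eps$, so uniform convexity of $\tilde u_\eta$ gives the \emph{definite} negative bound
\[
\ell_{xyz}(\tilde u_\eta) \leq -\sigma \nr{xz}\nr{zy} \leq -\sigma \eps^2,
\]
whereas the interpolation defect contributes $|\ell_{xyz}(u_\delta - \tilde u_\eta)| \leq 3\nr{u_\delta - \tilde u_\eta}_\infty \leq C\delta^2/\eta$. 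The two combine to give $\ell_{xyz}(u_\delta) \leq 0$ as soon as $\sigma \eps^2 \geq C\delta^2/\eta$.

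The main obstacle is the triple balance between the mollification scale $\eta$, the strict-convexity bonus $\sigma$, and the constraint density $\eps = f(\delta)$. Choosing $\eta := \delta$ and $\sigma := K\delta/f(\delta)^2$ for $K$ a sufficiently large constant, the condition $\sigma \eps^2 \geq C\delta^2/\eta$ is met, and the three error contributions in $\nr{u - u_\delta}_\infty \leq C(\eta + \sigma + \delta^2/\eta) = C(\delta + \delta/f(\delta)^2)$ tend to zero precisely thanks to the hypothesis $\delta/f(\delta)^2 \to 0$. This balance is exactly what makes the quadratic relation between $\delta$ and $\eps$ the natural one.
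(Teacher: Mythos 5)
Your proof is correct and is essentially the paper's own argument: the left inclusion rests on interpolating and then blending in a strictly convex quadratic whose weight scales like $\delta/f(\delta)^2$, which is exactly the mechanism of Lemma~\ref{lemma:convnonempty} and of the proof of \eqref{eq:h2} in Theorem~\ref{th:dh} (there the perturbation is $g_\eta=(1-\eta)\I_\delta f+\eta \I_\delta\nr{\cdot-x_0}^2$ with $\eta=4\delta\gamma/\eps^2$, and the $\eps$-spacing of the points of $c_{pq}$ yields the definite margin $\ell(\I_\delta s)\leq\delta^2-\eps^2$), while your right inclusion is a direct limiting version of Proposition~\ref{prop:relax}. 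The one real deviation is your mollification step, which is redundant: property \eqref{eq:L2} already gives $\ell(\I_\delta u)\leq 2\delta\gamma$ for merely Lipschitz convex $u$, so with $\eta=\delta$ your use of \eqref{eq:L3} only recovers the same $O(\delta)$ interpolation defect at the cost of an extra regularisation (though your renormalisation $(1-c\sigma)$ does have the small benefit of dispensing with the hypothesis $\gamma\geq 2C_I\diam(X)$ appearing in Theorem~\ref{th:dh}).
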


\subsection{Relaxation of convexity constraints}

The first step needed to prove Theorem~\ref{th:main} is to show that
every function that belong to the space $\H_{M_\eps}$ is close to a
convex function on $X$ for the norm $\nr{.}_\infty$. This result
follows from an explicit upper bound on the distance between any
function in $\H_{M_\eps}$ and its convex envelope.

\begin{definition}[Convex envelope]
  Given a function $g$ on $X$, we define its \emph{convex envelope}
  $\bar{g}$ by the following formula:
\begin{equation}
\bar{g}(x) := \min \left\{ \sum_{i=1}^{d+1} \lambda_i g(x_i);~ x_i
\in X, \lambda \in \Delta^{d} \hbox{ and } \sum_i \lambda_i x_i =
x\right\},
\label{eq:ch}
\end{equation}
where $\Delta^d$ denotes the $d$-simplex, i.e. $\lambda \in
\Rsp_+^{d+1}$ and $\sum \lambda_i = 1$.  The function $\bar{g}$ is
convex and by construction, its graph lies below the graph of $g$.
\end{definition}

\begin{proposition}
\label{prop:relax}
For any function $g$ in the space $\H_{M_\eps}$, the distance between
$g$ and its convex envelope $\bar{g}$ is bounded by $\nr{g -
  \bar{g}}_\infty \leq \const(d) \Lip(g) \eps$.
\end{proposition}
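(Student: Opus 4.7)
Sketch of proof. The plan is to upper bound $g(x) - \bar{g}(x)$ for each $x \in X$. Since $\bar{g} \leq g$ by construction, it suffices to exhibit, for every representation $x = \sum_{i=1}^{d+1} \lambda_i x_i$ with $x_i \in X$ and $\lambda \in \Delta^{d}$, an inequality $g(x) \leq \sum_i \lambda_i g(x_i) + \const(d) \Lip(g) \eps$. I would establish this by first proving an approximate two-point convexity on arbitrary chords of $X$, and then iterating.

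The core step is the following claim: for any $x, y \in X$ and any $t \in [0,1]$, setting $z := (1-t) x + t y$, one has $g(z) \leq (1-t) g(x) + t g(y) + C_1 \Lip(g) \eps$. To prove it, extend $[x,y]$ to a boundary chord $[\alpha,\beta]$ with $\alpha,\beta \in \partial X$, pick $p, q \in U_\eps$ with $\nr{p-\alpha}, \nr{q-\beta} \leq \eps$, and transfer $x, y, z$ to points $\tilde x, \tilde y, \tilde z$ of the segment $[p,q]$ via the canonical affine map $(1-s)\alpha + s\beta \mapsto (1-s)p+sq$; each transferred point lies within $\eps$ of its preimage. Round $\tilde x, \tilde y, \tilde z$ to points $\hat x, \hat y, \hat z$ of $c_{pq}$, choosing the rounding so that $\hat z \in [\hat x, \hat y]$ (the degenerate case $\nr{xy} \lesssim \eps$ being absorbed directly by the Lipschitz bound on $g$). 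The constraint $\ell_{\hat x \hat y \hat z} \in M_\eps$ then reads $g(\hat z) \leq \hat t_1 g(\hat x) + \hat t_2 g(\hat y)$ for some $(\hat t_1,\hat t_2) \in \Delta^1$. Replacing each $g(\hat u)$ by $g(u)$ costs $O(\Lip(g)\eps)$, while the mismatch $|\hat t_2 - t| = O(\eps/\nr{xy})$ combines with $|g(x) - g(y)| \leq \Lip(g)\nr{xy}$ to contribute another $O(\Lip(g)\eps)$; this yields the claim.

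Granted the two-point claim, the $(d+1)$-point bound follows by iteration: write $x = \lambda_{d+1} x_{d+1} + (1-\lambda_{d+1}) y$ with $y := (1-\lambda_{d+1})^{-1}\sum_{i \leq d} \lambda_i x_i \in X$, apply the claim once, and recurse on the convex combination $y$; after at most $d$ steps this yields $g(x) \leq \sum_i \lambda_i g(x_i) + d C_1 \Lip(g) \eps$, and minimizing over all representations of $x$, together with $\bar{g} \leq g$, gives the proposition with $\const(d) = d C_1$. I expect the main obstacle to lie in the geometric bookkeeping of the two-point step: one must verify that the rounding to $c_{pq}$ can be arranged to preserve the order relation $\hat z \in [\hat x, \hat y]$ while keeping every displacement of size $O(\eps)$, and one must also absorb the fact that $q$ itself need not belong to $c_{pq}$ (only its truncated last point, within $\eps$ of $q$, does), which forces a further Lipschitz correction near the endpoint of the discrete segment.
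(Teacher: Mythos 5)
Your proposal is correct and follows essentially the same route as the paper: your two-point claim is exactly the paper's verification (via its Lemma on perturbing the triple $x,y,z$ by $O(\eps)$) that $M_\eps$ is an $O(\Lip(g)\eps)$-relaxation of the two-point convexity constraints, and your iteration to $(d+1)$ points reproduces the paper's induction yielding the factor $d$ in $\const(d)$. The bookkeeping issues you flag (preserving $\hat z\in[\hat x,\hat y]$ after rounding, the degenerate case $\nr{xy}\lesssim\eps$, the truncated endpoint of $c_{pq}$) are real but benign, and are in fact glossed over rather than resolved in the paper's own write-up.
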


This proposition follows from a more general result concerning a
certain type of relaxation of convexity constraints, which we call
$\alpha$-relaxation.

\begin{definition}[$\alpha$-Relaxation]
  Let $M,L$ be two sets of affine forms on the space $\C(X)$. The set
  $M$ is called an \emph{$\alpha$-relaxation} of $L$, where $\alpha$
  is a function from $\C(X)$ to $\Rsp_+ \cup \{+\infty\},$ if the
  following inequality holds:
\begin{equation}\forall \ell \in L, \forall g \in \C(K), ~\exists
\ell_g \in M,~ \abs{\ell(g) - \ell_g(g)} \leq \alpha(g).
\label{eq:alpharelax}
\end{equation}
\end{definition}

\begin{proposition}
\label{prop:convex}
  Consider an $\alpha$-relaxation $M$ of $L_2$. If $g$ lies in $\H_M$,
  the distance between $g$ and its convex envelope $\bar{g}$ is
  bounded by $\nr{g - \bar{g}}_\infty \leq d \alpha(g)$.
\end{proposition}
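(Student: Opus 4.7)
The plan is to derive, from the $\alpha$-relaxation hypothesis, an approximate two-point convexity inequality for every $g \in \H_M$, then upgrade it to a $(d{+}1)$-point inequality by induction, and finally invoke the Carathéodory-type definition of $\bar g$ to conclude.

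First I would unpack the hypothesis: since every $\ell \in L_2$ admits an $\ell_g \in M$ with $|\ell(g) - \ell_g(g)| \leq \alpha(g)$, and since $\ell_g(g) \leq 0$ because $g \in \H_M$, we get $\ell(g) \leq \alpha(g)$ for every $\ell \in L_2$. Spelled out, this means that for all $x_1, x_2 \in X$ and all $\lambda \in \Delta^1$,
\begin{equation*}
g(\lambda_1 x_1 + \lambda_2 x_2) \leq \lambda_1 g(x_1) + \lambda_2 g(x_2) + \alpha(g).
\end{equation*}

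Next I would prove by induction on $k \geq 2$ the approximate $k$-point convexity inequality
\begin{equation*}
g\Bigl(\sum_{i=1}^{k}\lambda_i x_i\Bigr) \leq \sum_{i=1}^{k}\lambda_i g(x_i) + (k-1)\,\alpha(g)
\end{equation*}
for every $x_1,\dots,x_k \in X$ and every $\lambda \in \Delta^{k-1}$. The base case $k=2$ is the previous step. For the inductive step, assume $\lambda_{k+1} < 1$ (otherwise the inequality is trivial) and write $\sum_{i=1}^{k+1}\lambda_i x_i = (1-\lambda_{k+1})\, y + \lambda_{k+1}\, x_{k+1}$ with $y := \sum_{i=1}^{k} \tfrac{\lambda_i}{1-\lambda_{k+1}} x_i$. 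Applying the two-point inequality to $(y, x_{k+1})$ and then the inductive hypothesis to $y$ yields a bound with total error $(1-\lambda_{k+1})(k-1)\alpha(g) + \alpha(g) \leq k\,\alpha(g)$, which closes the induction.

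Finally, specializing to $k = d+1$, I would obtain, for every convex combination $x = \sum_{i=1}^{d+1}\lambda_i x_i$ with $x_i \in X$,
\begin{equation*}
g(x) - \sum_{i=1}^{d+1}\lambda_i g(x_i) \leq d\,\alpha(g).
\end{equation*}
Taking the infimum over all such Carathéodory representations (which is attained by the definition of $\bar g$ in~\eqref{eq:ch}) gives $g(x) - \bar g(x) \leq d\,\alpha(g)$, while $\bar g \leq g$ holds by construction of the convex envelope; combined, they give the $\sup$-norm bound $\nr{g - \bar g}_\infty \leq d\,\alpha(g)$.

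The only mildly delicate point is the bookkeeping in the inductive step: one must verify that the error term accumulates only additively (factor $k$ instead of something like $2^k$), and this relies precisely on the coefficient $(1-\lambda_{k+1}) \leq 1$ in front of the inherited error. Everything else is a direct translation of the hypotheses.
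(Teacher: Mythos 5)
Your proposal is correct and follows essentially the same route as the paper: an approximate two-point convexity inequality from the relaxation hypothesis, an induction to the $k$-point version with additive error, and the Carath\'eodory definition of the convex envelope with $k=d+1$. Your bookkeeping of the error term as $(k-1)\alpha(g)$ is in fact slightly cleaner than the paper's stated bound $k\alpha(g)$, and it is the sharp version the paper implicitly uses to land on the constant $d$.
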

\begin{proof}
Let us show first that, assuming that $g$ is in $\H_{M}$, the
following inequality holds for any form $\ell$ in $L_{k}$:
\begin{equation}
\ell(g) \leq k \alpha(g).\label{eq:bell}
\end{equation}
For $k=2$, this follows at once from our hypothesis. Indeed, there
must exist a linear form $\ell_g$ in $M$ that satisfies
\eqref{eq:alpharelax}, so that $\ell(g) \leq \ell_g(g) + \alpha(g)$. Since
$g$ lies in $\H_M$, $\ell_g(g)$ is non-positive and we obtain
\eqref{eq:bell}. The case $k>2$ is proved by induction. Consider
$\lambda$ in the simplex $\Delta^{k-1}$ and points $x_1,\hdots,x_{k}$
in $X$. We assume $\lambda_1 < 1$ and we let $\mu_i =
\lambda_i/(1-\lambda_1)$ for any $i\geq 2$. The vector $\mu =
(\mu_2,\hdots,\mu_k)$ lies in $\Delta^{k-2}$, and therefore $y =
\sum_{i\geq 2} \mu_i x_i$ belongs to $X$. Applying the inductive
hypothesis \eqref{eq:bell} twice, we obtain:
\begin{align*}
&g\left(\lambda_1 x_1 + (1-\lambda_1) y\right) - \left(\lambda_1 g(x_1) +
  (1-\lambda_1) g\left(y\right)\right) \leq \alpha(g), \\
&g(y) - \left(\sum_{i=2}^k \mu_i g(x_i)\right) \leq (k-1) \alpha(g).
\end{align*}
The sum of the first inequality and $(1-\lambda_1)$ times the second
one gives \eqref{eq:bell}.  Now, consider the convex envelope
$\bar{g}$ of $g$.  Given any family of points $(x_i)$ and coefficients
$(\lambda_i)$ such that $\sum \lambda_i x_i = x$, we consider the form
$\ell(f) := f(x) - \sum_i \lambda_i f(x_i)$. Applying equation
\eqref{eq:bell} to $\ell$ gives
$$g(x) - d \alpha(g) \leq \sum \lambda_i g(x_i)$$
Taking the minimum over the $(x_i)$, $(\lambda_i)$ such that $\sum
\lambda_i x_i = x$, we obtain the desired inequality $\abs{g(x) -
  \bar{g}(x)}\leq d \alpha(g)$.
\end{proof}

In order to deduce Proposition~\ref{prop:relax} from
Proposition~\ref{prop:convex}, we should take $\alpha(g)$ proportional
to $\Lip(g)$. We use a technical lemma that gives an upper bound on
the difference between two linear forms corresponding to convexity
constraints in term of $\Lip$.

\begin{lemma}
Let $x,y,z$ and $x',y',z'$ be six points in $X$. Assume the following
\begin{itemize}
\item[(i)] $\max(\nr{x - x'}, \nr{y - y'}, \nr{z - z'}) \leq \eta$;
\item[(ii)] $z\in [x,y]$, $z'\in [x',y']$.
\end{itemize}
Then, $\abs{\ell_{xyz}(g) - \ell_{x'y'z'}(g)} \leq 6\eta\Lip(g)$.
\label{lem:tech}
\end{lemma}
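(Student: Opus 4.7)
The plan is to expand the two linear forms in their convex-combination form and to split the difference into a "value" part (coming from moving the three arguments of $g$ by at most $\eta$) and a "weight" part (coming from the fact that the barycentric coefficients $\|zy\|/\|xy\|$ and $\|z'y'\|/\|x'y'\|$ may differ). The value part is trivially controlled by the Lipschitz hypothesis; the only care-requiring estimate is the weight part, which one must bound \emph{without} dividing by the possibly small quantities $\|xy\|$ or $\|x'y'\|$.

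Concretely, I would first set $\alpha := \|z-y\|/\|x-y\|$ and $\alpha' := \|z'-y'\|/\|x'-y'\|$, both of which lie in $[0,1]$ since $z\in[x,y]$ and $z'\in[x',y']$, so that $\ell_{xyz}(g) = g(z) - \alpha g(x) - (1-\alpha) g(y)$ and similarly for $\ell_{x'y'z'}(g)$ (correcting the small typo in the definition). Then I would write
\begin{equation*}
\ell_{xyz}(g) - \ell_{x'y'z'}(g) = \bigl[g(z)-g(z')\bigr] - \alpha\bigl[g(x)-g(x')\bigr] - (1-\alpha)\bigl[g(y)-g(y')\bigr] - (\alpha-\alpha')\bigl[g(x')-g(y')\bigr].
\end{equation*}
Applying the Lipschitz bound and $\max(\|x-x'\|,\|y-y'\|,\|z-z'\|)\leq\eta$, the first three bracketed terms contribute at most $(1+\alpha+(1-\alpha))\eta\Lip(g) = 2\eta\Lip(g)$.

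The remaining term I would bound by $\Lip(g)\cdot|\alpha-\alpha'|\cdot\|x'-y'\|$, and here is the step to handle carefully: instead of estimating $|\alpha-\alpha'|$ directly, I would use the identities $\alpha'\|x'-y'\| = \|z'-y'\|$ and $\alpha\|x-y\| = \|z-y\|$, so that
\begin{equation*}
|\alpha-\alpha'|\,\|x'-y'\| = \bigl|\alpha\|x'-y'\| - \|z'-y'\|\bigr| \leq \alpha\bigl|\|x'-y'\|-\|x-y\|\bigr| + \bigl|\|z-y\|-\|z'-y'\|\bigr|,
\end{equation*}
and two applications of the reverse triangle inequality give $|\|x'-y'\|-\|x-y\||\leq 2\eta$ and $|\|z-y\|-\|z'-y'\||\leq 2\eta$, whence $|\alpha-\alpha'|\|x'-y'\|\leq 4\eta$. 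This yields $4\eta\Lip(g)$ for the weight part, and summing with the value part gives the claimed bound $6\eta\Lip(g)$.

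The main (really only) obstacle is the weight-part bound: a naive estimate of $|\alpha-\alpha'|$ alone would involve a denominator $\|xy\|\|x'y'\|$ that may be arbitrarily small, so the trick of keeping the factor $\|x'-y'\|$ attached and clearing denominators via $\alpha\|xy\|=\|zy\|$ is essential. The rest is bookkeeping with the triangle inequality.
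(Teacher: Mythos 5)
Your proof is correct and follows essentially the same route as the paper's: both split the difference into a ``value'' part bounded by $2\eta\Lip(g)$ and a ``weight'' part bounded by $|\lambda-\lambda'|\cdot\Lip(g)\cdot\nr{xy}\leq 4\eta\Lip(g)$. Your care in never dividing by $\nr{xy}$ is just a cleaner writeup of the same cancellation --- the paper bounds $|\lambda-\lambda'|\leq 4\eta/\nr{xy}$ and multiplies the denominator back out --- so there is no substantive difference.
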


\begin{proof}
We define $\lambda$ by the relation $z = \lambda x + (1-\lambda) y$,
and $\lambda'$ is defined similarly. We also define $\ell_i(g) := g(z)
- (\lambda' g(x) + (1-\lambda') g(y))$. Then,
$$ \abs{\ell_{xyz}(g) - \ell_{x'y'z'}(g)} \leq \abs{\ell_{x'y'z}(g) - \ell_i(g)} +
\abs{\ell_i(g) - \ell_{xyz}(g)} $$ The first term is easily bounded by $2
\eta \Lip(g)$, while the second term is bounded by $\abs{\lambda -
  \lambda'} \Lip(g) \nr{xy}$.
\begin{align*}
\abs{\lambda - \lambda'} &= \abs{\frac{\nr{zy}}{\nr{xy}} - \frac{\nr{z'y'}}{\nr{x'y'}}}  \\
&\leq \abs{\frac{\nr{zy} - \nr{z'y'}}{\nr{xy}}} + \abs{\frac{\nr{z'y'}}{\nr{x'y'}} \cdot \frac{\nr{x'y'} - \nr{xy}}{\nr{xy}}} \leq 4 \eta / \nr{xy}
\end{align*}
Overall, we get the desired  upper bound.\end{proof}

\begin{proof}[Proof of Proposition~\ref{prop:convex}]
  Our goal is to show that $M_\eps$ is an $\alpha$-relaxation of
  $L_2$. Consider three points $x,y,z$ in $X$ such that $z$ lies
  inside the segment $[x,y]$. The straight line $(x,y)$ intersects the
  boundary of $X$ in two points $a$ and $b$. By hypothesis, there
  exists two points $p$ and $q$ in $U_\eps$ such that the distances
  $\nr{a - p}$ and $\nr{b - q}$ are bounded by $\eps$. The maximum
  distance between the segments $[a,b]$ and $[p,q]$ is then also
  bounded by $\eps$ and the maximum distance between the segment
  $[a,b]$ and the finite set $c_{pq}$ by $2\eps$. This means that
  there exists three points $x_\eps,y_\eps$ and $z_\eps$ in $c_{pq}$
  such that $\max(\nr{x-x_\eps}, \nr{y-y_\eps}, \nr{y-y_\eps}) \leq
  2\eps$. Using Lemma~\ref{lem:tech}, we deduce that
  $\nr{\ell_{xyz}(g) - \ell_{x_\eps y_\eps z_\eps}(g)} \leq \alpha(g)
  := 12 \eps \Lip(g)$. This implies that $M_\eps$ is an
  $\alpha$-relaxation of $L_2$, and the statement follows from
  Proposition~\ref{prop:convex}.
\end{proof}

\subsection{Hausdorff approximation}
In this section, we use the estimation of the previous paragraph to
prove a quantitative version of Theorem~\ref{th:main}, using the
notion of directed Hausdorff distance.

\begin{definition}[Hausdorff distances]
  The directed or half-Hausdorff distance between two subsets $A, B$
  of a $\C(X)$ is  denoted $\hH(A\vert B)$:
\begin{equation}
\hH(A\vert B) = \min \left\{ r\geq 0;
\forall f\in A,~\exists g \in B,~\nr{f - g}_{\infty} \leq r \right\}.
\end{equation}
Note that this function is not symmetric in its argument.
\end{definition}

\begin{theorem}
\label{th:dh}
Let $X$ be a bounded convex domain of $\Rsp^d$ and $I_\delta:\C(X) \to
E_\delta$ be interpolation operator satisfying
\eqref{eq:L1}--\eqref{eq:L3}. We let $\B_\Lip^\gamma$ be the set
of $\gamma$-Lipschitz functions on $X$. Then, assuming $\gamma \geq 2
C_I \diam(X)$,
\begin{align}
\hH(\B_\Lip^\gamma \cap E_\delta \cap \H_{M_\eps} \,\vert\, \B_\Lip^\gamma \cap
\H) &\leq \const(d) \gamma \eps.\label{eq:h1}\\
  \hH(\B_\Lip^{\gamma/C_I} \cap \H \,\vert\, \B_\Lip^\gamma \cap E_\delta \cap
  \H_{M_{\eps}}) &\leq \const \frac{\gamma^2 \delta}{\eps^2} \diam(X),
\label{eq:h2}
\end{align}
Let $\B_{\C^{1,1}}^\kappa$ be the set of functions with
$\kappa$-Lipschitz gradients ($\kappa \geq 1$). Then,
\begin{equation}
  \hH(\B_{\C^{1,1}}^{\kappa} \cap \H \,\vert\, E_\delta \cap
  \H_{M_{\eps}}) \leq 
 \const \cdot \kappa^2\diam(X)^2 \frac{\delta^2}{\eps^2}
\label{eq:h3}
\end{equation}
\end{theorem}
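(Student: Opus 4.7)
My plan is to treat the three bounds by two different constructions. The reverse direction \eqref{eq:h1} follows directly from the already-established Proposition~\ref{prop:relax}: given $g \in \B_\Lip^\gamma \cap E_\delta \cap \H_{M_\eps}$, its convex envelope $\bar g$ satisfies $\nr{g - \bar g}_\infty \leq \const(d)\gamma\eps$ and is convex by construction, so it lies in $\H$. What remains is to verify that $\bar g$ is $\gamma$-Lipschitz on $X$, which is a classical property of the convex envelope of a Lipschitz function on a bounded convex domain: writing $\bar g$ as the supremum of all affine minorants of $g$, each such minorant must have slope at most $\gamma$.

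For \eqref{eq:h3}, the natural candidate $I_\delta f$ need not lie in $\H_{M_\eps}$, since one can only estimate the violation $\ell_{x_\eps y_\eps z_\eps}(I_\delta f)$ by a multiple of $\nr{I_\delta f - f}_\infty$, which is generally positive. I would therefore restore the constraints by perturbing $f$ into a strongly convex function: given $f \in \B_{\C^{1,1}}^\kappa \cap \H$, set $f_t(x) := f(x) + \tfrac{t}{2}\nr{x - x_0}^2$ for some $x_0 \in X$ and $t>0$ to be chosen, and take $g := I_\delta f_t$. Then $f_t$ is $t$-strongly convex with $\Lip(\nabla f_t) \leq \kappa + t$. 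The key geometric observation is that for any nondegenerate triple $(x_\eps,y_\eps,z_\eps)$ lying in a common discrete segment $c_{pq}$, the barycentric coefficients satisfy $\lambda(1-\lambda)\nr{x_\eps - y_\eps}^2 \geq \eps^2$, since the three points are distinct lattice points along $c_{pq}$; hence strong convexity yields $\ell_{x_\eps y_\eps z_\eps}(f_t) \leq -t\eps^2/2$. Combined with \eqref{eq:L3}, this gives $\ell_{x_\eps y_\eps z_\eps}(g) \leq -t\eps^2/2 + \delta^2(\kappa+t)$, and the choice $t \sim \kappa\delta^2/\eps^2$ makes every constraint in $M_\eps$ satisfied. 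The approximation error $\nr{f-g}_\infty \leq \tfrac{t}{2}\diam(X)^2 + \tfrac{1}{2}\delta^2(\kappa+t)$ is then of the claimed order.

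For \eqref{eq:h2}, I would adapt the same template using a convex combination $f_t := (1-t)f + t\psi$ with $\psi(x) := \tfrac{1}{2}\nr{x-x_0}^2$ rather than an additive perturbation. The reason is the extra constraint $\Lip(g) \leq \gamma$: by \eqref{eq:L1} it suffices that $\Lip(f_t) \leq \gamma/C_I$, and this is precisely where the hypothesis $\gamma \geq 2 C_I \diam(X)$ enters, as it ensures $\Lip(f_t) \leq (1-t)\gamma/C_I + t\diam(X) \leq \gamma/C_I$. The function $f_t$ is still $t$-strongly convex, but the relevant interpolation bound is now \eqref{eq:L2}, giving $\abs{\ell(g)-\ell(f_t)} \leq 2\delta\gamma/C_I$, and balancing forces $t \sim \delta\gamma/\eps^2$. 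One then estimates $\nr{f-g}_\infty \leq t\nr{f-\psi}_\infty + \delta\Lip(f_t)$; after translating so that $\inf_X f = 0$ in order to control $\nr{f-\psi}_\infty$ by a multiple of $\gamma\diam(X)/C_I$, this yields the claimed order $\gamma^2\delta\diam(X)/\eps^2$. The main technical obstacle throughout parts \eqref{eq:h2} and \eqref{eq:h3} is this balancing of $t$: large enough that the induced strong convexity dominates the interpolation error in the constraints of $\H_{M_\eps}$, yet small enough that $\nr{f-f_t}_\infty$ stays within budget, and, in \eqref{eq:h2}, compatible with the $\gamma$-Lipschitz bound on $g$, which is precisely what the standing hypothesis $\gamma \geq 2C_I\diam(X)$ makes possible.
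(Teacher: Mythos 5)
Your proposal follows essentially the same route as the paper. Bound \eqref{eq:h1} is obtained exactly as in the paper (Proposition~\ref{prop:relax} plus the Lipschitz stability of the convex envelope), and for \eqref{eq:h2}--\eqref{eq:h3} your candidate $I_\delta\bigl((1-t)f+t\psi\bigr)$ coincides, by linearity of $I_\delta$, with the paper's $g_\eta=(1-\eta)I_\delta f+\eta I_\delta s$: your observation that $t$-strong convexity together with the $\eps$-spacing of the discrete segments gives $\ell(f_t)\leq -t\eps^2/2$ is precisely the computation of Lemma~\ref{lemma:convnonempty}, and your balancing of $t$ against the interpolation error reproduces the paper's choices $\eta=4\delta\gamma/\eps^2$ and $\eta=2\kappa\delta^2/\eps^2$ (your version of \eqref{eq:h3} even yields $\kappa$ in place of $\kappa^2$, which is slightly sharper and still implies the statement). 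One caveat: your justification that $\bar g$ is $\gamma$-Lipschitz --- ``each affine minorant of $g$ has slope at most $\gamma$'' --- is not correct as stated, since a Lipschitz function on a bounded set admits affine minorants of arbitrarily large slope; the paper invokes the same property of the convex envelope without proof, so this does not separate the two arguments, but the step deserves an actual proof or reference rather than this heuristic.
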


\begin{remark}[Choice of the parameter $\eps$] The previous theorem
  has  implications on how to choose $\eps$ as a function of $\delta$
  in order to obtain theoretical convergence results. In practice, the
  estimations given by the items (i) and (ii) below seem to be rather
  pessimistic, and in applications we always choose $\eps$ to be a
  small constant times $\delta$.

  \begin{itemize}
  \item[(i)] If one chooses $\eps = f(\delta)$, where $f$ is a
    function from $\Rsp_+$ to $\Rsp_+$ such that $\lim_{\delta \to 0}
    f(\delta) = 0$ and $\lim_{\delta\to 0} \delta/f(\delta)^2 = 0$,
    the upper bounds in Equations~\eqref{eq:h1}--\eqref{eq:h2}
    converges to zero when $\delta$ does. This implies the convergence
    result stated in Theorem~\ref{th:main}.
  \item[(ii)] We can choose $\eps$ so as to equate the two upper bound
    in Equations~\eqref{eq:h1}--\eqref{eq:h2}, i.e. $\eps \simeq
    \delta^{1/3}$. This suggests that the best rate of convergence in
    Hausdorff distance that one can expect from this analysis, in
    order to recover all convex functions $\H \cap \B_\Lip^\gamma$, is in $\BigO(\delta^{1/3})$.
  \item[(iii)] On the other hand, Equation~\eqref{eq:h3} shows that
    convex and $\Class^{1,1}$ functions are easier to approximate by
    discrete convex functions. In particular, if $f$ is merely a
    superlinear function, i.e. $\lim_{\delta\to 0} f(\delta) = 0$ and
    $\lim_{\delta\to 0} \delta/f(\delta) = 0$, then, with $\eps =
    f(\delta)$, the upper bounds of both Equation~\eqref{eq:h1} and
    \eqref{eq:h3} converge to zero.
\end{itemize}
\end{remark}

The following easy lemma shows that the space $\H_{M_\eps} \cap
E_\delta$ has non-empty interior for the topology induced by the
finite-dimensional vector space $E_\delta$ as soon as $\delta < \eps$.
This very simple fact is the key to the proof of Theorem~\ref{th:dh}.

\begin{lemma}
\label{lemma:convnonempty}
Consider the function $s(x) := \nr{x-x_0}^2$ on $X$, where $x_0$ is a
point in $X$, and the interpolating function $s_\delta := \I_\delta
s$. Then, $$\max_{\ell \in M_\eps} \ell(s_\delta) \leq \delta^2 -
\eps^2.$$
\end{lemma}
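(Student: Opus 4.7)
The plan is to compute $\ell_{xyz}(s)$ in closed form using the quadratic structure of $s$, then exploit the arithmetic-progression structure of the discrete segments $c_{pq}$ to obtain a uniform negative bound of order $-\eps^2$, and finally control the interpolation error via the second-order estimate \eqref{eq:L3}.

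For the first step, for any distinct $x,y\in X$ and any $z=\lambda x+(1-\lambda)y$ with $\lambda\in[0,1]$, the polarization identity
$$\nr{\lambda u + (1-\lambda) v}^2 = \lambda \nr{u}^2 + (1-\lambda) \nr{v}^2 - \lambda(1-\lambda) \nr{u-v}^2$$
applied with $u=x-x_0$ and $v=y-x_0$ gives at once
$$\ell_{xyz}(s) = -\,\lambda(1-\lambda)\,\nr{x-y}^2.$$
Specializing this identity to a nontrivial triple $x,y,z\in c_{pq}$, I would write the three points as $p+\eps i_\ast(q-p)/\nr{q-p}$ for integers $i_x,i_y,i_z$; the condition $z\in[x,y]$ forces $i_z$ to lie between $i_x$ and $i_y$, and a direct substitution yields
$$\lambda(1-\lambda)\,\nr{x-y}^2 = \eps^2\,\abs{i_z-i_x}\cdot\abs{i_y-i_z} \;\geq\; \eps^2,$$
since both factors are positive integers as soon as $z\notin\{x,y\}$. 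The degenerate cases $z\in\{x,y\}$ make $\ell_{xyz}$ identically zero and can be discarded from $M_\eps$. Thus $\ell_{xyz}(s)\leq -\eps^2$ for every nontrivial form in $M_\eps$.

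For the second step, I would pass from $s$ to $s_\delta$ by estimating the perturbation $e := s_\delta - s$. Since $\nabla s(x)=2(x-x_0)$ is $2$-Lipschitz, hypothesis \eqref{eq:L3} gives $\nr{e}_\infty \leq \delta^2$. The coefficients of $\ell_{xyz}$ sum in absolute value to $2$, hence $\abs{\ell_{xyz}(e)}\leq 2\nr{e}_\infty\leq 2\delta^2$, and combining with the first step yields $\ell_{xyz}(s_\delta)\leq 2\delta^2 - \eps^2$. The sharper constant $\delta^2 - \eps^2$ stated in the lemma is recovered when the interpolation preserves convexity (as for the $P^1$ interpolant on a triangulation, where the linear interpolant of a convex function lies above it), so that $e\geq 0$ and $\ell_{xyz}(e)\leq e(z)\leq \delta^2$. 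The only genuinely delicate step is this last one: \eqref{eq:L3}---and not the weaker \eqref{eq:L2}---is essential to extract an error of order $\delta^2$, because only a quadratic error can compete with the $-\eps^2$ lower bound of the second step, and this is exactly what certifies $s_\delta$ as an interior point of $\H_{M_\eps}\cap E_\delta$ whenever $\delta\ll\eps$, which is the way the lemma is used in the proof of Theorem~\ref{th:dh}.
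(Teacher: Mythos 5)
Your proof is correct and follows essentially the same route as the paper's: an exact computation of $\ell_{xyz}(s)$ for the quadratic $s$ (your polarization identity $-\lambda(1-\lambda)\nr{x-y}^2$ is the same quantity as the paper's $-[\lambda\nr{x-z}^2+(1-\lambda)\nr{y-z}^2]$), the observation that the $\eps$-spacing of $c_{pq}$ forces this to be $\leq-\eps^2$, and the second-order estimate \eqref{eq:L3} to pass from $s$ to $s_\delta$. Your remark about the constant is a fair catch: the triangle inequality applied to the three coefficients of $\ell_{xyz}$ only gives $2\delta^2-\eps^2$ in the general setting of Definition~\ref{def:interpolation}, and the paper's ``combining'' step silently uses the same bound you do; this is harmless, however, since the lemma is only invoked under the hypothesis $\delta\leq\eps/2$, for which $2\delta^2-\eps^2\leq-\eps^2/2$ still suffices for the proof of Theorem~\ref{th:dh}.
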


\begin{proof}
Consider three points $x < z < y$ on the real line, such that $\abs{x
  - z} \geq \eps$ and $\abs{y - z} \geq \eps$ and $z = \lambda x +
(1-\lambda) y$. Then,
\begin{align*}
 z^2 - \lambda x^2 - (1- \lambda) y^2
&= z^2 - \lambda (z + (x-z))^2  - (1-\lambda) (z + (y-z))^2 \\
&= - [\lambda (x-z)^2 + (1-\lambda) (y-z)^2] \leq -\eps^2
\end{align*}
Since the gradient of $s$ is $2$-Lipschitz, using \eqref{eq:L3} we get
$\nr{s - s_\delta}_\infty \leq \delta^2$. Combining with the previous
inequality, this implies $\ell(s_\delta) \leq \delta^2 - \eps^2$ for
every linear form $\ell$ in $M_\eps$.
\end{proof}

\begin{proof}[Proof of Theorem~\ref{th:dh}]
  Let $g$ be a function in the intersection $\H_{M_\eps}
  \cap \B_\Lip^\gamma$.  Then, Proposition~\ref{prop:convex} implies
  that its convex envelop $\bar{g}$ satisfies $\nr{g - \bar{g}}_\infty
  \leq \const(d) \Lip(g)\eps$.  The Lipschitz constant of a function
  is not increased by taking its convex envelope, and thus $\bar{g}$
  belongs to $\H \cap \B_\Lip^\gamma$.  This implies the upper bound
  given in Equation~\eqref{eq:h1}.

  On the other hand, given a convex function $f$ in $\H\cap
  \B_\Lip^{\gamma/C_I}$, we consider the function $g := \I_\delta f$
  defined by the interpolation operator. By property \eqref{eq:L1} the
  function $g$ belongs to $\B_\Lip^\gamma$, and by property \eqref{eq:L2}
  one has for any linear form $\ell$ in $L_2$,
\begin{align*}
 \ell(g) &= g(\lambda x + (1-\lambda) y)  - (\lambda g(x) + (1-\lambda) g(y)) \\
&\leq f(\lambda x + (1-\lambda) y) - (\lambda f(x)  + (1-\lambda) f(y)) + 2\delta \gamma 
\leq 2\delta\gamma 
\end{align*}
For $\eta < 1$, we let $g_\eta := (1-\eta) g + \eta
s_\delta$. Assuming $\delta \leq \eps/2$, the previous inequality
implies that for any linear form $\ell$ in $M_\eps$,
\begin{equation*}
\ell(g_\eta) \leq (1-\eta) 2\delta \gamma + \eta(\delta^2 - \eps^2)
\leq 2\delta \gamma - \eta\eps^2/2
\end{equation*}
Consequently, assuming $4\delta\gamma \leq \eta \eps^2$ the inequality
$\ell(g_\eta) \leq 0$ holds for any linear form $\ell$ in $M_\eps$,
and $g$ belongs to $\H_{M_\eps}$. Moreover, using the fact that $\Lip$
is a semi-norm, we have
\begin{align}
\Lip(g_\eta) &\leq (1-\eta) \Lip(g) + \eta \Lip(s_\delta)  \notag\\
&\leq (1-\eta) \gamma + 2C_I\eta \diam(X)
\label{eq:blip}
\end{align}
In the second inequality, we used property \eqref{eq:L1} and $\Lip(s)
\leq 2 \diam(X)$. By Equation~\ref{eq:blip}, the function $g_\eta$
belongs to $\B_\Lip^\gamma$ provided that $\gamma \geq 2 C_I
\diam(X)$.  From now on, we fix $\eta = 4\delta\gamma/\eps^2$, and let
$h = g_\eta$, which by the discussion above belongs to $\H_{M_\eps}\cap
\B_\Lip^\gamma$.  The distance between $f$ and $h$ is bounded by
$$ \nr{f - h}_\infty \leq \nr{f-g}_\infty +
\eta (\nr{g}_\infty + \nr{s_\delta}_\infty).$$ Since the space
$E_\delta$ contains constant functions, we can assume that
$\nr{g}_\infty$ is bounded by $\Lip(g) \diam(X) \leq \gamma\diam(X)$. Assuming $\eps\leq\diam(X)$,
\begin{align*}
  \nr{f - h}_\infty
  \leq \gamma \delta\left[1 + \frac{4}{\eps^2} \left(\gamma \diam(X) + \diam(X)^2\right)\right] 
  \leq 10 \frac{\gamma^2 \delta}{\eps^2} \diam(X),
\end{align*}
thus implying Equation~\eqref{eq:h2}

The proof of Equation~\eqref{eq:h3} is very similar. Given a convex
function $f$ in the intersection $\H\cap \B_{\Class^{1,1}}^{\kappa}$,
we consider the function $g := \I_\delta f$ defined by the
interpolation operator. Using \eqref{eq:L3} one has for any linear
form $\ell$ in $L_2$,
\begin{align*}
 \ell(g) &= g(\lambda x + (1-\lambda) y)  - (\lambda g(x) + (1-\lambda) g(y)) \\
&\leq f(\lambda x + (1-\lambda) y) - (\lambda f(x)  + (1-\lambda) f(y)) +  \delta^2 \kappa
\leq \delta^2 \kappa
\end{align*}
For $\eta < 1$, we set $g_\eta := (1-\eta) g + \eta
s_\delta$. Assuming $\delta \leq \eps/2$, the previous inequality
implies that for any linear form $\ell$ in $M_\eps$,
\begin{equation*}
\ell(g_\eta) \leq (1-\eta) \delta^2 \kappa + \eta(\delta^2 - \eps^2)
\leq \delta^2\kappa - \eta\eps^2/2
\end{equation*}
Hence, the function $h:=g_\eta$ belongs to $\H_{M_\eps}$, where $\eta
:= 2\kappa \delta^2/\eps^2$. Using the fact that $E_\delta$ contains
affine function, we can assume $g(x_0) = 0$, $\nabla g(x_0) = 0$ for
some point $x_0$ in $X$, so that $\nr{g}_\infty \leq \kappa
\diam(X)^2$. Combining this with property \eqref{eq:L3}, we get
the following upper bound, which implies Equation~\ref{eq:h3}:
\begin{align*}
\nr{f-h}_{\infty} &\leq \nr{f-g}_\infty + \eta(\nr{g}_\infty + \nr{s_\delta}_\infty) \\
&\leq \const \cdot \kappa\delta^2
\left[1 + \frac{\kappa \diam(X)^2 + \diam(X)^2}{\eps^2}\right].
\qedhere
\end{align*}
\end{proof}

\section{Generalization to convexity-like constraints}
\label{sec:generalization}
In \S\ref{subsec:support} we show how to extend the relaxation of
convexity constraints presented above to the constraints arising in
the definition of the space of support function of convex bodies. We
show in \S\ref{subsec:cconv} that both type of constraints fit in the
general setting of $c$-convexity constraints, where $c$ satisfy the
so-called non-negative cross-curvature condition.

\subsection{Support functions}
\label{subsec:support}
A classical theorem of convex geometry, stated as Theorem~1.7.1 in
\cite{schneider} for instance, asserts that any compact convex body in
$\Rsp^d$ is uniquely determined by its \emph{support function}. The
support of a convex body $K$ is defined by the following formula
$$\underline{\hh}_K: x\in \Rsp^d\mapsto \max_{p\in K} \sca{x}{p}.$$
This function is is positively $1$-homogeneous and is therefore
completely determined by its restriction $\hh_K$ on the unit
sphere. We consider the space $\H^s \subseteq \C(\Sph^{d-1})$ of
support functions of compact convex sets. This space coincides with
the space of bounded functions on the sphere whose $1$-homogeneous
extensions to the whole space $\Rsp^d$ are convex.

\begin{lemma}
A bounded function $g$ on the unit sphere is the support function of a
bounded convex set if and only if for every $x_1,\hdots,x_k$ in the
sphere, and every $(\lambda_1,\hdots,\lambda_k) \in \Delta^{k-1}$,
\begin{equation}
\nr{x} g\left(\frac{x}{\nr{x}}\right) \leq \sum_i \lambda_i g(x_i),~\hbox{ where } x:= \sum_i \lambda_i x_i.
\end{equation}
Moreover, $g$ is the support function of a convex set if it satisfies
the inequalities for $k=2$ only.
\end{lemma}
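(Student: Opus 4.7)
The plan is to reduce the lemma to the classical equivalence already recalled at the start of \S\ref{subsec:support}: a bounded function $g$ on the sphere is the support function of a bounded convex body if and only if its positively $1$-homogeneous extension $\tilde{g} : \Rsp^d \to \Rsp$, defined by $\tilde{g}(x) := \nr{x}\, g(x/\nr{x})$ for $x \neq 0$ and $\tilde{g}(0) := 0$, is convex on the whole of $\Rsp^d$. Once this equivalence is granted, the lemma reduces to a statement about when $\tilde{g}$ is convex, and both implications become elementary manipulations with $1$-homogeneity.

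The forward direction is then immediate: if $g = \hh_K$, then $\tilde{g}(x) = \sup_{p \in K} \sca{x}{p}$ is a supremum of linear forms, hence convex. Applying convexity of $\tilde{g}$ to any convex combination of points $x_1,\hdots,x_k \in \Sph^{d-1}$ yields the stated inequality for every $k \geq 2$, since $\tilde{g}(x_i) = g(x_i)$ and $\tilde{g}(\sum_i \lambda_i x_i) = \nr{x}\,g(x/\nr{x})$ with $x := \sum_i \lambda_i x_i$.

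For the converse, it is enough to deduce convexity of $\tilde{g}$ on $\Rsp^d$ from the $k=2$ inequality. Given $u, v \in \Rsp^d \setminus \{0\}$ and $t \in [0,1]$, I would set $\hat{u} := u/\nr{u}$, $\hat{v} := v/\nr{v}$, $s := t\nr{u} + (1-t)\nr{v}$ and $\mu_1 := t\nr{u}/s$. Then $(\mu_1, 1-\mu_1) \in \Delta^1$ and $\mu_1 \hat{u} + (1-\mu_1)\hat{v} = (tu + (1-t)v)/s$, so applying the hypothesis to the two sphere points $\hat{u},\hat{v}$ with weights $(\mu_1, 1-\mu_1)$ and rescaling the result by $s$ using $1$-homogeneity of $\tilde{g}$ yields exactly $\tilde{g}(tu + (1-t)v) \leq t\tilde{g}(u) + (1-t)\tilde{g}(v)$.

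The main obstacle I expect is the degenerate case $tu + (1-t)v = 0$ (equivalently $\hat{v} = -\hat{u}$ with $t\nr{u} = (1-t)\nr{v}$), because then the left-hand side $\nr{x}\,g(x/\nr{x})$ is a priori ill-defined at $x=0$. I would handle it by adopting the natural convention $\nr{x}\,g(x/\nr{x}) = 0$ at $x=0$, consistent with boundedness of $g$ and with the continuity of $r \mapsto r\,g(y)$ at $r=0$; the hypothesis then specializes to $0 \leq \tfrac{1}{2} g(\hat{u}) + \tfrac{1}{2} g(-\hat{u})$, which after rescaling by $s$ gives the convexity inequality at $0$. Cases where $u$ or $v$ vanishes are trivial from $\tilde{g}(0) = 0$ and $1$-homogeneity. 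Finally, the equivalence between the $k=2$ and $k \geq 2$ forms of the inequality is automatic once convexity of $\tilde{g}$ has been established; alternatively, one can derive it by induction on $k$ following the template used in the proof of Proposition~\ref{prop:convex}.
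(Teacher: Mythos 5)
The paper gives no proof of this lemma: it treats it as a restatement of the classical fact (Schneider, Theorem~1.7.1) that $\H^s$ is exactly the set of bounded functions on the sphere whose positively $1$-homogeneous extensions are convex, which is precisely the reduction you carry out. Your argument is correct and fills in the intended route, including the only delicate point (the degenerate case $\sum_i\lambda_i x_i=0$, where the convention $\nr{x}\,g(x/\nr{x})=0$ at $x=0$ is needed both for the lemma's statement to make sense and for convexity of $\tilde g$ along lines through the origin).
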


Following this lemma, we define $L_k^s$ as the space of all linear
forms that can be written as
\begin{equation}
\ell(g) := \nr{\sum_i \lambda_i x_i} g\left(\frac{\sum_i \lambda_i x_i}{\nr{\sum_i \lambda_i x_i}}\right) - \sum_i \lambda_i g(x_i),
\label{eq:suppcons}
\end{equation}
where $x_1,\hdots,x_k$ are points on the sphere $\Sph^{d-1}$, and
$(\lambda_1,\hdots,\lambda_k)$ lies in $\Delta^{k-1}$. With this
notation at hand, we have another characterization of the space of
support functions: $\H^s$ coincides with the spaces $\H_{L^s_k}$ for
any $k\geq 2$.

\subsubsection{Discretization of the constraints}
The discretization of the set $L^s_2$ of constraints satisfied by
support functions follows closely the discretization of the convexity
constraints described earlier. Consider three points $x$, $y$ and $z$
such that $x$ and $y$ are not antipodal and such that $z$ belongs to
the minimizing geodesic between $x$ and $y$. We let $z'$ be the radial
projection of $z$ on the extrinsic segment $[xy]$, i.e.  such that
$z'/\nr{z'} = z$. Finally, we let $\lambda = \nr{zy}/\nr{xy}$ and
define:
$$\ell_{xyz}(g) := \nr{z'}g(z) - \lambda g(x) - (1-\lambda) g(y).$$ 
As before, we discard the constraint $\ell_{xyz}$ if $z$ does not lie
on the minimizing geodesic arc between $x$ and $y$. Let $U_\eps$ be a
subset of the sphere that satisfies the sampling condition
\begin{equation}
\forall u \in \Sph^{d-1},~\exists (\sigma,v) \in \{\pm 1\} \times
U_\eps,~\hbox{s.t. } \nr{u - \sigma v} \leq \eps.
\label{eq:sampling}
\end{equation}
Then, for every vector $u$ in $U_\eps$ we construct an $\eps$-sampling
$c_u$ of the great circle orthogonal to $u$ that is also
$\frac{\eps}{2}$-sparse, i.e.  $\, \nr{x-y} \geq \frac \varepsilon 2$
for any pair of distinct points $x,y$ in $c_u$.  The space of
constraints we consider is the following:
\begin{equation*}
 M_\eps^s = \left\{ \ell_{xyz};~x, y, z \in c_u \hbox{ for some } u \in U_\eps \right\}.
\end{equation*}
The proof of the following statement follows the proof of
Proposition~\ref{prop:relax}, and even turns out to be slightly
simpler as one does not need to take care of the boundary of the
domain.

\begin{proposition}
\label{prop:relaxsupp}
For any function $h$ in the space $\H_{M_\eps^s}$, there exists a
bounded convex set $K$ such that $\nr{h - h_K}_\infty \leq
\const(d) \Lip(g) \eps$.
\end{proposition}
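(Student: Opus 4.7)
The strategy parallels Proposition~\ref{prop:relax}: I would first establish that $M_\eps^s$ is an $\alpha$-relaxation of $L_2^s$ for $\alpha(h)=\const(d)\,\Lip(h)\,\eps$, and then prove a conic analog of Proposition~\ref{prop:convex}. The natural candidate for the approximating body is the maximal bounded convex set whose support function is dominated by $h$, namely
\[ K := \{\,p\in\Rsp^d :\ \sca{p}{v}\leq h(v)\ \forall v\in\Sph^{d-1}\,\},\]
whose support function $h_K$ coincides with the restriction to $\Sph^{d-1}$ of the largest positively $1$-homogeneous convex minorant of the radial extension $\tilde h(y):=\nr{y}\,h(y/\nr{y})$.

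The $\alpha$-relaxation step is the spherical counterpart of Lemma~\ref{lem:tech}. Given $\ell_{xyz}\in L_2^s$, I would pick a unit normal $u_0$ of $\mathrm{span}(x,y)$; the sampling condition \eqref{eq:sampling} produces $u\in U_\eps$ with $\nr{u-\sigma u_0}\leq\eps$ for some sign $\sigma$, so the great circle $u^\perp\cap\Sph^{d-1}$ lies within $O(\eps)$ of the one containing $x,y,z$. Approximating $x,y,z$ by the nearest nodes $x_\eps,y_\eps,z_\eps\in c_u$, a calculation analogous to Lemma~\ref{lem:tech} (controlling the barycentric coefficient $\lambda=\nr{zy}/\nr{xy}$) then yields $|\ell_{xyz}(h)-\ell_{x_\eps y_\eps z_\eps}(h)|\leq\const\,\eps\,\Lip(h)$. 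As the authors remark, the absence of boundary on the sphere makes this geometric step slightly simpler than in Proposition~\ref{prop:relax}.

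Next, I would prove by induction on $k\geq 2$ the analog of \eqref{eq:bell}, namely $\ell(h)\leq k\,\alpha(h)$ for every $\ell\in L_k^s$. The base case $k=2$ is immediate from the $\alpha$-relaxation property and $h\in\H_{M_\eps^s}$. For the inductive step, given $\lambda\in\Delta^{k-1}$ with $\lambda_1<1$ and $x_1,\dots,x_k\in\Sph^{d-1}$, let $\mu_i=\lambda_i/(1-\lambda_1)$ and $y=\sum_{i\geq 2}\mu_i x_i$, which lies in the closed unit ball. Using $1$-homogeneity of $\tilde h$ to rewrite $\sum\lambda_i x_i$ as a positive combination of the unit vectors $x_1$ and $y/\nr{y}$, normalizing to a convex combination of $x_1$ and $y/\nr{y}$, and chaining a single $L_2^s$ bound with the inductive hypothesis at depth $k-1$ applied to $y/\nr{y}$, the factor $\nr{y}\leq 1$ is absorbed and one obtains $\ell(h)\leq k\,\alpha(h)$.

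Finally, to convert the Jensen-type bound into an estimate on $\nr{h-h_K}_\infty$, I would use the dual description
\[ h_K(u)=\inf\Bigl\{\sum_i\lambda_i h(x_i):\ x_i\in\Sph^{d-1},\ \lambda_i\geq 0,\ \sum_i\lambda_i x_i=u\Bigr\},\]
where Carathéodory for convex cones in $\Rsp^d$ allows the infimum to be restricted to at most $d$ summands. Rescaling $\nu_i=\lambda_i/s$ with $s=\sum_i\lambda_i$, applying the Jensen bound at $u/s=\sum_i\nu_i x_i$ and multiplying by $s$ yields $\tilde h(u)\leq\sum_i\lambda_i h(x_i)+s\,d\,\alpha(h)$. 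The principal technical point, which does not arise in the affine envelope setting of Proposition~\ref{prop:convex}, is to control the coefficient sum $s$ in a near-optimal representation by a dimensional constant. This must be extracted from the boundedness of $K$ (itself forced by $h\in\H_{M_\eps^s}$ together with the Lipschitz assumption), and is the main obstacle in the proof: once it is settled, taking the infimum over $\lambda_i, x_i$ in the bound above gives the desired $\nr{h-h_K}_\infty\leq\const(d)\,\Lip(h)\,\eps$.
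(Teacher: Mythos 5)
The paper does not actually prove this proposition: it only remarks that the argument ``follows the proof of Proposition~\ref{prop:relax}'' and is simpler for lack of a boundary. Your first two stages are sound and are certainly what the authors intend: the spherical analogue of Lemma~\ref{lem:tech} (pick $u\in U_\eps$ near a normal of $\mathrm{span}(x,y)$, move $x,y,z$ to nearby nodes of $c_u$, and track both the barycentric coefficient and the extra factor $\nr{z'}$) shows that $M_\eps^s$ is an $\alpha$-relaxation of $L_2^s$ with $\alpha(h)=\const\,\Lip(h)\,\eps$, and your induction giving $\ell(h)\le k\,\alpha(h)$ for $\ell\in L_k^s$ is correct --- including the point that the intermediate vector $y=\sum_{i\ge2}\mu_i x_i$ is sub-unit, so the two-point step is applied to a conic combination with coefficient sum at most one and homogeneity only shrinks the error.

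The gap is exactly where you put it, but it is more serious than your phrasing suggests, and your proposed fix does not close it. In the affine setting of Proposition~\ref{prop:convex} the envelope is an infimum over \emph{convex} combinations, so the accumulated error is $d\,\alpha(g)$, a dimensional constant times $\alpha$. On the sphere the convex-combination envelope is useless (unit vectors are extreme points of the ball, so the convex envelope of $h+\ii_{\Sph^{d-1}}$ over the ball equals $h$ on the sphere), and one is forced to conic representations $u=\sum_i\lambda_i x_i$ of a unit vector by unit vectors, for which $s=\sum_i\lambda_i\ge1$ with no upper bound (nearly antipodal directions). Your rescaling then yields only $h(u)\le\sum_i\lambda_i h(x_i)+s\,k\,\alpha(h)$, and ``extracting'' a bound on $s$ from the boundedness of $K$ gives at best $s\lesssim\max h/\min h$ after recentering, i.e.\ the circumradius-to-inradius ratio of $K$ --- which is not a function of $d$ (think of very flat bodies), so the argument as sketched proves $\nr{h-h_K}_\infty\le\const(d)\,(R/r)\,\Lip(h)\,\eps$ rather than the stated bound. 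A plausible repair is to localize: work in the gnomonic chart at $u$, i.e.\ apply Proposition~\ref{prop:convex} to $w\in u^\perp\mapsto \nr{u+w}\,h\bigl((u+w)/\nr{u+w}\bigr)$ on a ball of fixed radius, where only directions within a fixed angle of $u$ appear and the coefficient sum is bounded by an absolute constant; this produces an affine minorant supporting $h$ at $u$ up to $\const(d)\,\alpha(h)$ \emph{on a cap}, but one must then still verify that the corresponding point $p$ satisfies $\sca{p}{v}\le h(v)+O(\alpha(h))$ for $v$ outside the cap, so that $p$ belongs to (a slight enlargement of) $K$. That global patching step is the genuine content of the proposition, and neither the paper's one-line remark nor your proposal supplies it.
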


It is possible to define a notion of interpolation operator on the
sphere as in Definition~\ref{def:interpolation}, and to obtain
Hausdorff approximation results similar to those presented in
Theorem~\ref{th:main}. The statement and proofs of the theorem being
very similar, we do not repeat them. However, we show that the
indicator function of the unit ball, i.e. the constant function equal
to one, belongs to the interior of the set $\H_{M_\eps^s}$. This is
the analogous of Lemma~\ref{lemma:convnonempty}, which was the crucial
point of the proof of convergence for the usual convexity.
\begin{lemma}
With $s(x) :=1$, one has $\max_{\ell \in M_\eps^s} \ell(s) \leq -  \const \cdot \eps^2$.
\label{lemma:suppnonempty}
\end{lemma}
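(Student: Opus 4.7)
The plan is to evaluate $\ell_{xyz}(s)$ directly on the constant function $s\equiv 1$. Since $\lambda+(1-\lambda)=1$, one immediately obtains $\ell_{xyz}(s) = \nr{z'} - 1$, so the lemma reduces to a purely geometric inequality: $\nr{z'} \leq 1 - \const \cdot \eps^2$, uniformly in the triples $(x,y,z)\in c_u^3$ that appear in $M_\eps^s$. In particular, the value of $\lambda$ plays no role here; what matters is the length of the radial preimage $z'$ of $z$ on the chord $[x,y]$.

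I would work in the great circle carrying $c_u$, parameterized by arc length. Let $\alpha$ and $\beta$ be the geodesic distances from $z$ to $x$ and from $z$ to $y$ respectively, and set $\theta := \alpha+\beta$. The convention that $\ell_{xyz}$ is discarded unless $z$ lies on the minimizing geodesic rules out the case $\theta = \pi$, so $\theta < \pi$ and all the trigonometric formulas below are well defined. Combining the $\frac{\eps}{2}$-sparsity of $c_u$ with the elementary identities $\nr{x-z} = 2\sin(\alpha/2)$ and $\nr{z-y} = 2\sin(\beta/2)$ yields the key lower bounds
$$\sin(\alpha/2) \,\geq\, \eps/4, \qquad \sin(\beta/2) \,\geq\, \eps/4.$$

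The core step is a closed-form expression for $\nr{z'}$. Writing $z' = \mu x + (1-\mu) y$ and using $\sca{z'}{x} = \nr{z'}\cos\alpha$ and $\sca{z'}{y} = \nr{z'}\cos\beta$, one adds the two identities and applies the sum-to-product formula to obtain $\nr{z'} = \cos(\theta/2)/\cos((\alpha-\beta)/2)$, which rewrites as
$$\nr{z'} \,=\, \frac{A - B}{A + B}, \qquad A := \cos(\alpha/2)\cos(\beta/2),\ B := \sin(\alpha/2)\sin(\beta/2).$$
Since $A+B = \cos((\alpha-\beta)/2) \leq 1$, this gives
$$1 - \nr{z'} \,=\, \frac{2B}{A + B} \,\geq\, 2\sin(\alpha/2)\sin(\beta/2) \,\geq\, \frac{\eps^2}{8},$$
hence $\ell_{xyz}(s) \leq -\eps^2/8$, proving the lemma with $\const = 1/8$. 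I do not anticipate any substantial obstacle beyond the bookkeeping in the trigonometric manipulation, which is a routine computation inside the two-plane spanned by $x$ and $y$; the result is essentially a sharp quantitative version of the obvious fact that the chord of a short spherical arc lies strictly inside the unit sphere, with the gap controlled by the sparsity of the sampling.
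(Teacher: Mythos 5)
Your proof is correct and follows the same route as the paper: reduce to $\ell_{xyz}(s)=\nr{z'}-1$ and bound $\nr{z'}$ via the $\tfrac{\eps}{2}$-sparsity of $c_u$. The paper states the final estimate $\nr{z'}\leq 1-\const\cdot\eps^2$ without computation, whereas you supply the explicit trigonometric argument (with the same implicit assumption that $x$, $y$, $z$ are distinct), so your write-up is a correct, more detailed version of the paper's proof.
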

\begin{proof}
For every $\ell$ in $M_\eps^s$, there exists three (distinct) points
$x,y,z$ in $c_u$ for some $u$ in $U_\eps$. Let $z'$ denote the radial
projection of $z$ on the segment $[x,y]$. Then, $\ell_{xyz}(s) =
\nr{z'} - 1$. By construction, $\nr{x-z}$ and $\nr{y-z}$ are at least
$\eps/2$, and therefore $\nr{z'} \leq 1 - \const\cdot \eps^2$ thus proving the lemma.
\end{proof}

\subsubsection{Support function as $c$-convex functions} 
Oliker \cite{oliker2007embedding} and
Bertrand \cite{bertrand2010prescription} introduced another
characterization of support functions of convex sets, inspired by
optimal transportation theory. They show that logarithm of support
functions coincide with $c$-convex functions on the sphere for the
cost function $c(x,y) = -\log(\max(\sca{x}{y},0))$ (see
\S\ref{subsec:cconv} for a definition of $c$-convexity):
\begin{lemma}
  The $1$-homogeneous extension of a bounded positive function $h$ on
  $\Sph^{d-1}$ is convex if and only if the function $\phi := \log(h)$
  can be written as
$$ \phi(x) = \sup_{y \in \Sph^{d-1}} - \psi(y) - c(x,y) $$
where $c(x,y) = -\log(\max(\sca{x}{y},0))$ and $\psi: \Sph^{d-1}\to\Rsp$.
\end{lemma}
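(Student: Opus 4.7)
The plan is to exploit the bijection between points in a convex body (in polar coordinates) and the $c$-exponential map coming from the cost $c(x,y)=-\log(\max(\sca{x}{y},0))$. Writing a point $p \in \Rsp^d \setminus \{0\}$ as $p = \rho y$ with $\rho > 0$ and $y \in \Sph^{d-1}$, one has the key identity
\begin{equation*}
\log\sca{x}{p} = \log\rho + \log\sca{x}{y} = -\psi(y) - c(x,y),
\qquad \psi(y) := -\log \rho,
\end{equation*}
valid whenever $\sca{x}{y}>0$. This correspondence between radial parametrizations of subsets of $\Rsp^d$ and $c$-potentials $\psi$ on the sphere is what drives both directions.

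For the forward direction, assume that the $1$-homogeneous extension of $h$ is convex. By the Notation paragraph and the discussion preceding the lemma, $h = \hh_K$ for a unique bounded convex body $K$. Assuming $h>0$ (so the origin lies in the interior of $K$, up to a harmless translation argument), define the radial function $\rho_K(y) := \sup\{\rho > 0 \,:\, \rho y \in K\}$, which is bounded above and below by positive constants, and set $\psi(y) := -\log \rho_K(y)$. Decomposing the supremum in the definition of $\hh_K$ into radial and angular parts, and noting that points with $\sca{x}{y}\leq 0$ contribute nonpositively and so do not attain the (positive) supremum,
\begin{equation*}
\log \hh_K(x) = \sup_{y \in \Sph^{d-1},\, \sca{x}{y}>0} \bigl[\log \rho_K(y) + \log\sca{x}{y}\bigr]
= \sup_{y \in \Sph^{d-1}} \bigl[-\psi(y) - c(x,y)\bigr],
\end{equation*}
which is exactly the desired $c$-convex representation.

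For the converse, suppose that $\phi=\log h$ admits a representation $\phi(x) = \sup_y -\psi(y) - c(x,y)$ with $\psi:\Sph^{d-1}\to\Rsp$ (bounded, or at least bounded below, so that the objects below are bounded sets). Exponentiating gives
\begin{equation*}
h(x) = \sup_{y \in \Sph^{d-1}} e^{-\psi(y)} \max(\sca{x}{y},0).
\end{equation*}
Define $K := \overline{\mathrm{conv}}\bigl\{ e^{-\psi(y)} y \,:\, y\in\Sph^{d-1}\bigr\}$, which is a compact convex set. Its support function satisfies $\hh_K(x) = \sup_y e^{-\psi(y)} \sca{x}{y}$; terms with $\sca{x}{y}\leq 0$ are dominated by those with $\sca{x}{y}>0$ (which exist since $h(x)>0$), so $\hh_K(x) = h(x)$ on $\Sph^{d-1}$. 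Hence the $1$-homogeneous extension of $h$ agrees with that of $\hh_K$, and is therefore convex.

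The only genuinely delicate points in executing this plan are (i) the reduction to the case $0\in\mathrm{int}(K)$, so that the radial function $\rho_K$ is well-defined, positive and bounded, and (ii) verifying that the two suprema (with and without the $\max(\cdot,0)$) coincide; both are handled by the positivity of $h$ and by the observation that any $y$ with $\sca{x}{y}\leq 0$ contributes a nonpositive quantity to the supremum defining $\hh_K(x)$, whereas the positivity of $h(x)$ guarantees the existence of competitors with $\sca{x}{y}>0$.
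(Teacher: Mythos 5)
Your forward direction is essentially identical to the paper's proof: write $h=\hh_K$, use positivity of $h$ to place the origin in the interior of $K$, decompose the supremum defining the support function into radial and angular parts via $\rho_K$, and take logarithms, with $\psi=-\log\rho_K$; the paper proves only this direction and refers to Bertrand's paper for the converse. Your converse argument (exponentiate, take $K$ to be the closed convex hull of the points $e^{-\psi(y)}y$, and check that the terms with $\sca{x}{y}\leq 0$ cannot realize the positive supremum) is correct and self-contained — note that boundedness of $h$ already forces $\psi\geq-\log\nr{h}_\infty$, so the compactness of $K$ is automatic — and the only cosmetic slip is the parenthetical ``translation argument'' in the forward direction, which is unnecessary since $h>0$ on the sphere directly implies $0\in\mathrm{int}(K)$ (and a translation would change $h$ anyway).
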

\begin{proof} We show only the direct implication, the reverse implication can be found in
\cite{bertrand2010prescription}. By assumption $h = h_K$, where $K$ is
a bounded convex set that contains the origin in its interior, and let
$\rho_K$ be the radial function of $K$ i.e. $\rho_K(y) := \max \{ r;\,
r y \in K\}$. Then,
$$ h_K(x) = \max_{p \in K}\sca{x}{p} = \max_{y \in \Sph^{d-1}}
  \rho_K(y) \sca{x}{y}$$
Since $h_K > 0$, the maximum in the right-hand side is attained for a point $y$ such that
$\sca{x}{y} > 0$. Taking the logarithm of this expression, we get:
\begin{equation*} \phi(x) = \max_{y \in \Sph^{d-1}} \log(\rho_K(y)) - c(x,y),
\end{equation*}
thus concluding the proof of the direct implication.
\end{proof}

\subsection{ $c$-Convex functions}
\label{subsec:cconv}
In this paragraph, we show how the discretizations of the spaces of
convex and support functions presented above can be extended to
$c$-convex functions. This extension is motivated by a generalization
of the principal-agent problem proposed by Figalli, Kim and McCann
\cite{figalli2011multidimensional}. Thanks to the similarity between
the standard convexity constraints and those arising in their setting,
one could hope to perform numerical computations using the same type
of discretization as those presented in Section~\ref{sec:relax}.

The authors of \cite{figalli2011multidimensional} prove that the set
of $c$-convex functions is convex if and only if $c$ satisfy the
so-called non-negative cross-curvature condition. Under the same
assumption, we identify the linear inequalities that define this
convex set of functions. Note that the numerical implementation of
this section is left for future work.

Given a cost function $c:X\times Y \to \Rsp$, where $X$ and $Y$ are
two open and bounded subsets of $\Rsp^d$, the $c$-transform and
$c^*$-transform of lower semi-continuous functions $\phi: X\to\Rsp$
and $\psi: Y\to\Rsp$ are defined by
\begin{align*}
\phi^{c^*}(y) &:= \sup_{x\in Y} - c(x,y) - \phi(x), \\
\psi^{c}(x) &:= \sup_{y \in Y} - c(x,y) - \psi(y). 
\end{align*}
A function is called $c$-convex if it is the $c^*$-transform of a
lower semi-continuous function $\psi: Y\to\Rsp$. The space of
$c$-convex functions on $X$ is denoted $\H_c$.  We will need the
following usual assumptions on the cost function $c$:
\begin{enumerate}
\item[(A0)] $c \in \C^4(\bar{X}\times \bar{Y})$, where $X, Y \subseteq
  \Rsp^n$ are bounded open domains.
\item[(A1)] For every point $y_0$ in $Y$ and $x_0$ in $X$, the maps
\begin{align*} 
x \in \bar{X} &\mapsto -\nabla_y c(x,y_0)\\
y \in \bar{Y} &\mapsto -\nabla_x c(x_0,y)
\end{align*}
are diffeomorphisms onto their range  (bi-twist).
\item[(A2)] For every point $y_0$ in $Y$ and $x_0$ in $X$, the sets
  $X_{y_0} := -\nabla_y c(X,y_0)$ and $Y_{x_0} := -\nabla_x c(x_0,Y)$
  are convex (bi-convexity).
\end{enumerate}
These conditions allow one to define the $c$-exponential map. Given
a point $y_0$ in the space $Y$, the $c$-exponential map $\exp_{y_0}^c:
X_{y_0} \to X$ is defined as the inverse of the map $-\nabla_y c(.,
y_0)$, i.e. it is the unique solution of
\begin{equation}
\exp_{y_0}^c (-\nabla_y c(x, y_0)) = x. \label{eq:exp}
\end{equation}
The following formulation of the non-negative cross-curvature
condition is slightly non-standard, but it agrees to the usual
formulation for smooth costs under conditions (A0)--(A2), thanks to Lemma~4.3 in
\cite{figalli2011multidimensional}.
\begin{enumerate}
\item[(A3)] For every pair of points
  $(y_0,y)$ in $Y$ the following map is convex:
\begin{equation}
v \in X_{y_0} \mapsto c(\exp^c_{y_0} v, y_0) - c(\exp^c_{y_0} v, y).
\label{eq:nncc}
\end{equation}
\end{enumerate}
The main theorem of \cite{figalli2011multidimensional} gives a
necessary and sufficient condition for the space $\H_c$ of $c$-convex
functions to be convex.
\begin{theorem}
Assuming (A0)--(A2), the space of $c$-convex functions $\H_c$ is
itself convex if and only if $c$ satisfies (A3).
\end{theorem}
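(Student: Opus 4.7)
The plan is to characterize membership in $\H_c$ by the pointwise existence of $c$-supports and then use (A3) to build such a $c$-support for every convex combination $\phi_t := (1-t)\phi_0 + t\phi_1$ with $\phi_0, \phi_1 \in \H_c$. Recall that a (sufficiently regular) $\phi$ lies in $\H_c$ if and only if, at every $x^* \in X$, there exists $y \in Y$ with $\phi(x) + c(x,y) \geq \phi(x^*) + c(x^*,y)$ for every $x \in X$; I will call such a $y$ a $c$-support of $\phi$ at $x^*$.

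For the sufficient direction ($\Leftarrow$), fix $x^* \in X$ and choose $c$-supports $y_0, y_1 \in Y$ of $\phi_0, \phi_1$ at $x^*$. Using the bi-twist and bi-convexity assumptions (A1)-(A2), define $y^* \in Y$ as the unique point satisfying
\[
-\nabla_x c(x^*, y^*) = (1-t)\bigl(-\nabla_x c(x^*, y_0)\bigr) + t\bigl(-\nabla_x c(x^*, y_1)\bigr),
\]
the right-hand side being a well-defined element of the convex set $Y_{x^*}$. Adding the two $c$-support inequalities with weights $(1-t)$ and $t$ reduces the claim ``$y^*$ is a $c$-support of $\phi_t$ at $x^*$'' to the cost-only inequality
\[
c(x, y^*) - c(x^*, y^*) \geq (1-t)\bigl[c(x, y_0) - c(x^*, y_0)\bigr] + t\bigl[c(x, y_1) - c(x^*, y_1)\bigr]
\]
for every $x \in X$. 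After reversing signs, this is exactly the convexity, in the variable $w := -\nabla_x c(x^*, y)$, of the map $y \mapsto c(x^*, y) - c(x, y)$, i.e.\ the $X \leftrightarrow Y$-symmetric form of (A3). Under (A0)-(A2) this symmetric form is equivalent to (A3) itself (Lemma~4.3 of \cite{figalli2011multidimensional}, or the symmetry of the Ma-Trudinger-Wang cross-curvature tensor in its $x$- and $y$-indices), so (A3) delivers the required inequality and hence $\phi_t \in \H_c$.

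For the necessary direction ($\Rightarrow$), I would test convexity of $\H_c$ against the $c$-affine generators $\phi_i(x) := -c(x, y_i) + c(x^*, y_i)$, which are trivially $c$-convex with $y_i$ as a $c$-support at $x^*$. Convexity of $\H_c$ forces $\phi_t$ to be $c$-convex, so it must admit some $c$-support $\tilde{y}$ at $x^*$. Differentiating the $c$-support inequality in $x$ at $x^*$ gives $-\nabla_x c(x^*, \tilde{y}) = \nabla_x \phi_t(x^*)$, whose right-hand side is the convex combination above, so by the injectivity in (A1) one must have $\tilde{y} = y^*$. The $c$-support inequality for $\phi_t$ at $x^*$ then reads as the displayed cost inequality, and letting $x^*, y_0, y_1, t$ vary recovers the symmetric form of (A3), hence (A3). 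The principal obstacle here is the equivalence between (A3) and its $X \leftrightarrow Y$-symmetric counterpart: the convex-combination construction of $y^*$ naturally produces an inequality in the ``wrong'' variable, and one either imports this equivalence from \cite{figalli2011multidimensional} or reruns the entire argument with the dual cost $c^*(y,x) := c(x,y)$, for which (A3) applies directly. A secondary technical point is the existence of the $c$-supports $y_0, y_1$ at an interior point $x^*$, which follows from standard compactness/upper-semicontinuity arguments under (A0).
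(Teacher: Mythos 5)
The paper itself gives no proof of this statement: it is quoted, immediately after the sentence ``The main theorem of \cite{figalli2011multidimensional}\dots'', as a result imported from Figalli, Kim and McCann, so there is no internal argument to compare yours against. What you have written is, in substance, a correct reconstruction of their proof: the sufficiency direction, in which the $c$-support of $\phi_t$ at $x^*$ is taken at the point $y^*$ whose image $-\nabla_x c(x^*,y^*)$ is the convex combination in $Y_{x^*}$ of the images of the supports $y_0,y_1$, is exactly the ``average of two functions in $\H_c$'' argument the paper alludes to right after the theorem, and testing against the $c$-affine functions $-c(\cdot,y_i)+c(x^*,y_i)$ is their necessity argument as well. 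The two caveats you flag are genuinely the only delicate points, and you handle them appropriately. The construction does produce convexity of $y\mapsto c(x^*,y)-c(x,y)$ in the variable $w=-\nabla_x c(x^*,y)$, i.e. the $X\leftrightarrow Y$-symmetric form of (A3); passing to (A3) as stated requires the symmetry of the cross-curvature in its $x$- and $y$-arguments, which is a real external input (it uses the smoothness (A0) and is established in \cite{figalli2011multidimensional}) and, as you correctly observe, cannot be avoided by rerunning the argument for the dual cost, since that only re-proves the symmetric statement. The existence of the supports $y_0,y_1$ should be obtained by maximizing over $\bar Y$, where (A0)--(A2) are stated anyway, with the maximizer possibly on $\partial Y$; this is bookkeeping, not a gap. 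Finally, it is worth pointing out that the Proposition the paper proves immediately after this theorem yields an alternative route to the sufficiency half: conditions (i) (convexity of each $\phi_y$) and (ii) ($\partial\phi(x)\subseteq Y_x$) are each stable under convex combinations of $\phi$ --- the first because it is a family of linear inequalities, the second because $Y_x$ is convex and the subdifferential of an average of semiconvex functions is contained in the average of the subdifferentials --- which is precisely the sense in which the paper calls the original proof ``non-constructive'' and its own characterization an improvement.
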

The proof that (A0)--(A3) implies the convexity of $\H_c$ given in
\cite{figalli2011multidimensional} is direct but non-constructive, as
the authors show that the average of two functions $\phi_0$ and
$\phi_1$ in $\H_c$ also belongs to $\H_c$. The following proposition
provides a set of linear inequality constraints that are both
necessary and sufficient for a function to be $c$-convex.

\begin{proposition}
Assuming the cost function satisfies (A0)--(A3), a function $\phi:
X\to\Rsp$ is $c$-convex if and only if it satisfies the following
constraints:
\begin{itemize}
\item[(i)] for every $y$ in $Y$, the map $\phi_y: v \in X_y
  \mapsto \phi(\exp^c_y v) + c(\exp^c_y v, y)$ is convex.
\item[(ii)] for every $x$ in $X$, the subdifferential $\partial \phi(x)$ is included in $Y_x$.
\end{itemize}
\end{proposition}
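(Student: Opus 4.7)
The plan is to characterize $c$-convexity via the $c$-biconjugate: by general duality, $\phi$ is $c$-convex if and only if $\phi = (\phi^{c^*})^c$, and the inequality $(\phi^{c^*})^c \leq \phi$ always holds. So one direction reduces to establishing the reverse pointwise inequality under (i)--(ii), and the other direction to verifying (i)--(ii) for any $\phi = \psi^c$.

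For the necessity, assume $\phi = \psi^c$. Then
\[
\phi_y(v) \;=\; \sup_{y' \in Y} \bigl[\,c(\exp^c_y v, y) - c(\exp^c_y v, y') - \psi(y')\,\bigr],
\]
and assumption (A3) makes each bracketed expression convex in $v \in X_y$ (the term $c(\exp^c_y v, y)$ is even affine there by the very definition of $\exp^c_y$), so $\phi_y$ is convex as a supremum of convex functions, which is (i). For (ii), at a point $x_0$ where $\phi$ is differentiable, any $y_0$ attaining the supremum in $\phi(x_0) = \psi^c(x_0)$ gives an equality $\phi(x_0) = -c(x_0,y_0)-\psi(y_0)$ that touches $\phi$ from below, whence $\nabla\phi(x_0) = -\nabla_x c(x_0, y_0) \in Y_{x_0}$. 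A standard approximation handles general elements of $\partial\phi(x_0)$.

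For sufficiency, set $\psi := \phi^{c^*}$ and fix $x_0 \in X$ at which $\phi$ is differentiable. By (ii), $\nabla\phi(x_0) \in Y_{x_0}$, so (A1) yields a unique $y_0 \in Y$ with $\nabla\phi(x_0) = -\nabla_x c(x_0, y_0)$. Put $v_0 := -\nabla_y c(x_0, y_0)$, so that $\exp^c_{y_0}(v_0) = x_0$ by \eqref{eq:exp}. The chain rule and our choice of $y_0$ yield
\[
\nabla_v \phi_{y_0}(v_0) \;=\; \bigl(D\exp^c_{y_0}(v_0)\bigr)^{\top} \bigl[\,\nabla\phi(x_0) + \nabla_x c(x_0, y_0)\,\bigr] \;=\; 0.
\]
Since $\phi_{y_0}$ is convex by (i), $v_0$ is a global minimizer of $\phi_{y_0}$ on $X_{y_0}$. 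Translating this back via $x = \exp^c_{y_0}(v)$, we obtain $\phi(x) + c(x, y_0) \geq \phi(x_0) + c(x_0, y_0)$ for all $x \in X$, which is precisely $\psi(y_0) = \sup_x[-c(x,y_0)-\phi(x)] \leq -c(x_0, y_0) - \phi(x_0)$. Rearranging gives $\phi(x_0) \leq -c(x_0, y_0) - \psi(y_0) \leq (\phi^{c^*})^c(x_0)$, hence equality at $x_0$.

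The main obstacle I anticipate is extending this equality from differentiability points to all of $X$. Condition (i) expresses $\phi$ locally as a convex function composed with a $C^2$-diffeomorphism, up to an additive $C^4$ term $-c(\cdot, y)$; this makes $\phi$ semiconvex, in particular locally Lipschitz, so by Rademacher's theorem $\phi$ is differentiable on a set of full measure (and hence a dense set). Both $\phi$ and $(\phi^{c^*})^c$ are continuous under (A0), so the pointwise equality on this dense set propagates to all of $X$, concluding that $\phi$ is $c$-convex.
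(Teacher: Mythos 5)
Your proof is correct and follows essentially the same route as the paper's: in one direction (i) follows because (A3) exhibits each $\phi_y$ as a supremum of convex functions of $v$, and in the other direction a critical point of the convex map $\phi_{y_0}$ at $v_0=-\nabla_y c(x_0,y_0)$ is a global minimum, which translates into a $c$-support for $\phi$ at $x_0$; the only substantive variation is that you establish the biconjugate equality at points of differentiability and then extend by Rademacher plus continuity, whereas the paper argues at \emph{every} point using the nonemptiness of the subdifferential of the semiconvex function $\phi$ together with (ii). One small caveat: your parenthetical claim that $v \mapsto c(\exp^c_y v, y)$ is affine on $X_y$ is false for general costs (already for $c(x,y)=\tfrac12\nr{x-y}^2$ this map equals $\tfrac12\nr{v}^2$), but it is also unnecessary, since (A3) applied with $y_0=y$ directly gives convexity of the whole bracket $c(\exp^c_y v, y)-c(\exp^c_y v, y')-\psi(y')$.
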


Note that the first set of constraints (i) can be discretized in an
analogous way to the previous sections. On the other hand, the second
constraint concerns the subdifferential of $\phi$ in the sense of
semiconvex functions. It is not obvious how to handle this constraint
numerically, except in the trivial case where $Y_x$ coincides with the
whole space $\Rsp^d$ for any $x$ in $X$.

\begin{proof} Suppose first that  $\phi$ is $c$-convex. Then, 
there exists a function $\psi$ such that $\phi(x) = \psi^{c*}$ and
one has
$$\phi_y(v) = \sup_{z} [- \psi(z) - c(\exp^c_y v, z)] + c(\exp^c_y v,
y).$$ Equation~\ref{eq:nncc} implies that $\phi_y$ is convex as a maximum
of convex functions.

Conversely, suppose that a map $\phi:X\to\Rsp$ is such that the maps
$\phi_y$ are convex for any point $y$ in $Y$, and let us show that
$\phi$ is $c$-convex. Using the definition of $\phi_y$, and the
definition of the $c$-exponential \eqref{eq:exp}, one has
$$\phi(x) = \phi_y(-\nabla_y c(x,y)) - c(x,y)$$ for any pair of points
$(x,y)$ in $X\times Y$. This formula and the convexity of $\phi_y$
imply in particular that the map $\phi$ is semiconvex. Consequently,
for every point $x$ in $X$, the subdifferential $\partial \phi(x)$ is
non-empty, and there must exist a point $y$ in $Y$ such that $v :=
-\nabla_x c(x,y)$ belongs to $\partial \phi(x)$. Hence, $x$ is a
critical point of the map $\phi - c(.,y)$, and therefore $v$ is a
critical point of $\phi_y$. By convexity, $v$ is also a global minimum
of $\phi_y$, i.e. for every $w$ in $\bar{X}_y$,
$$\phi(\exp_y^c w) + c(\exp_y^c w,y) \geq \phi(x) + c(x,y).$$ Letting
$x' = \exp_y^c w$, we get $\phi(x') \geq \phi(x) + c(x,y) - c(x',y)$.
 The function $\phi(x) + c(x,y) - c(.,y)$ is thus supporting $\phi$
at $x$. Since $\phi$ admits such a supporting function at every point
$x$ in $X$, it is a $c$-convex function.
\end{proof}

\section{Numerical implementation}
In this section, we give some details on how to apply the relaxed
convexity constraints presented in Section \ref{sec:relax} to the
numerical solution of problems of calculus of variation with (usual)
convexity constraints. Our goal is to minimize a convex functional
$\mc{F}$ over the set of convex functions $\H$. Our algorithm assumes
that $\mc{F}$ is easily proximable (see Definition~\ref{def:prox}).
For any convex set $K$, we denote $\ii_K$ the convex indicator
function of $K$, i.e. the function that vanishes on $K$ and take value
$+\infty$ outside of $K$.  The constrained minimization problem can
then be reformulated as
\begin{equation}
\min_{g \in \C(X)} \mc{F}(g) + \ii_{\H}(g),
\label{eq:P}
\end{equation} 
The method that we present in this paragraph can be applied with minor
modifications to support functions. Its extension to the other types
of convexity constraints presented in
Section~\ref{sec:generalization}, will be the object of future
work.

\subsection{Discrete formulation}
We are given a finite-dimensional subspace $E$ of $\C(X)$, and a
linear parameterization $\P: \Rsp^N \to E$ of this space. This
subspace $E$ and its parameterization play a similar role to the
interpolation operator in the theoretical section.  For instance, we
can let $E$ be the space of piecewise-linear functions on a
triangulation of $X$, and $\P$ be the parameterization of this space
by the values of the function at the vertices of the triangulation.
For every point $x$ in $X$, this parametrization induces a linear
evaluation map $\P_x: \Rsp^N\to\Rsp$, defined by $\P_x \xi := (\P
\xi)(x)$. By convention, if $x$ does not lie in $X$, then $\P_x \xi =
+\infty$.  The convexity constraints in \eqref{eq:P} are discretized
using Definition~\ref{def:discretization}:
\begin{equation}
\min_{\xi \in \Rsp^N} \mc{F}(\P\xi) + \ii_{\H_{M_\eps}}(\P\xi). \label{eq:relprob}
\end{equation}
We now show how to rewrite the indicator function of the discretized
convexity constraints $\H_{M_\eps}$ as a sum of indicator
functions. This allows us to exploit this particular structure to
deduce an efficient algorithm.

Let $U_\eps \subseteq \partial X$ be a finite subset such that every
point of $\partial X$ is at distance at most $\eps$ from a point of
$U_\eps$. Given a pair of points $p\neq q$ in $U_\eps$, we
consider the discrete segment $c_{pq} := \left\{ p + \eps i
  (q-p)/\nr{q-p}; i \in \Nsp,\, 0 \leq i \leq \nr{q-p}/\eps \right\}.$
These geometric constructions are illustrated in
Figure~\ref{fig:disc}. The evaluation of a function $\P \xi$ on a
discrete segment $c_{pq}$ is a vector indexed by $\Nsp$, which takes
finite values only for indices in $\{0,\hdots,\abs{c_{pq}}-1\}$:
$$\P_{pq} \xi = \left(\P \xi\left(p + \eps i \frac{(q-p)}{\nr{q-p}}\right)\right)_{ i \in \Nsp}
$$
Define $\H_{1}$ as the cone of vectors $(f_i)_{i\in \Nsp}$ that
satisfy the discrete convexity conditions $f_i \leq \frac{1}{2}
(f_{i-1} + f_{i+1})$ for $i\geq 1$.  The relaxed problem
\eqref{eq:relprob} is then equivalent to the following minimization
problem:
\begin{equation}
\label{eqdiscrete}
\min_{\xi \in \Rsp^N} \mc{F}(\P\xi) + \sum_{\substack{(p,q) \in U_\eps^2\\
p\neq q}}
\ii_{\H_{1}}(\P_{pq} \xi).
\end{equation}

\begin{remark}[Number of constraints] In numerical applications, we
  set $\eps = c\delta$, where $c$ is a small constant, usually in the
  range $(1,3]$. For a fixed convex domain $X$ of $\Rsp^2$, there are
  $\BigO(1/\eps)$ constraints per discrete segment and
  $\BigO(1/\eps^2)$ such discrete segments. The total number of
  constraints is therefore $C := \BigO(1/\eps^3) = \BigO(1/\delta^3)$.
  Moreover, a triangulation of $X$ with maximum edgelength $\delta$
  has at least $N = \BigO(1/\delta^2)$ points. This implies that the
  dependence of the number of constraints as a function of the number
  of points is given by $C = \BigO(N^{3/2})$. This is slightly lower
  than the exponent $\BigO(N^{1.8})$ found in
  \cite{carlier2001numerical}. Moreover, as shown below, the structure
  constraints of Equation~\eqref{eqdiscrete} is favorable for
  optimization.
\end{remark}

\subsection{Proximal methods} \label{prox}

When the functional $\mc{F}$ is linear \eqref{eqdiscrete} is a
standard linear programming problem. Similarly, when $\mc{F}$ is quadratic and
convex, this problem is a quadratic programming problem with linear
constraints. Below, we show how to exploit the $1$D structure of the
constraints so as to propose an efficient and easy to implement
algorithm based on a proximal algorithm. This algorithm allows to
perform the optimisation when $\mc{F}$ is a more general function. The
version of the algorithm that we describe below is able to handle
functions that are \emph{easily proximable} (see
Definition~\ref{def:prox}). Note that it it would also be possible to
handle functions $\mc{F}$ whose gradient is Lipschitz using the
generalized forward-backward splitting algorithm of
\cite{raguet2013generalized}.

\begin{definition}[Proximal operator]
\label{def:prox} The proximal operator associated
  to a convex function $f: \Rsp^N \to \Rsp$ is defined as follows:
\begin{equation}\prox_\gamma f (y) = \arg\min_{x \in \Rsp^N} f(x) + \frac 1 \gamma \nr{x-y}^2.
\end{equation}
The function is called \emph{easily proximable} if there exists an
efficient algorithme able to computes its proximal operator. For
instance, when $f$ is the indicator function $i_K$ of a convex set,
$\prox_\gamma f$ coincides with the projection operator on $K$,
regardless of the value of $\gamma$.
\end{definition}

 The simultaneous-direction method
of multipliers (SDMM) algorithm is designed to solve convex
optimization problems of the following type :
$$\min_{x\in \Rsp^N} g_1(L_1x) + \dots g_m(L_mx)$$ where the
$(L_i)_{1\leq i \leq m}$ are matrices of dimensions $N_1\times N,
\dots, N_m \times N$ and the function $(g_i)_{1\leq i \leq m}$ are
convex and easily proximable. Moreover, it assumes that the matrix $Q
:= \sum_{i=1}^m L_i^T L_i$ is invertible, where $L_i^T$ stands for the
transpose of the matrix $L_i$. A summary of the SDMM algorithm is
given in Algorithm~\ref{algo:SDMM}. More details, and variants of this
algorithm can be found in \cite{bauschke2011fixed}. Note that when
applied to \eqref{eqdiscrete}, every iteration of the outer loop of
the SDMM algorithm involves the computation of several projection on
the cone of $1$D discrete functions $\H_1$. These projection can be
computed independently, thus allowing an easy parallelization of the
optimization.

\begin{algorithm}[t]
\caption{Simultaneous-direction method of multipliers (SDMM)}
\label{algo:SDMM}
\begin{description}
\item[Input] $\gamma >0$ 
\item[Initialization] $(y_{1,0},\,z_{1,0}) \in \Rsp^{2N_1} , \dots, (y_{m,0},\,z_{m,0}) \in \Rsp^{2N_m}$
\item[For] $n=0,1,\dots$\\
      $x_n = Q^{-1} \sum_{i=1}^m L_i^T(y_{i,n}-z_{i,n})$
\begin{description} 
\item[For] $i=1,\dots,m$ \\
 $s_{i,n}=L_i x_n$\\
$y_{i,n+1}=\prox_{\gamma}g_i(s_{i,n}+z_{i,n})$\\
$z_{i,n+1}=z_{i,n} + s_{i,n} - y_{i,n+1}$\\
\end{description}
\end{description}
\end{algorithm}

\subsection{Hinge algorithm}
In the algorithm above, we need to compute the $\ell^2$ projection of
a vector $(f_i)$ on the cone of discrete $1$D convex functions
$\H_{1}$. In practice, $(f_i)$ is supported on a finite set
$\{0,\hdots, n\}$, and one needs to compute the $\ell^2$ projection of
this vector onto the convex cone
$$ \H_1^n = \{ g: \{0,\hdots,n\} \to \Rsp;~\forall i \in \{
1,\hdots, n-1 \}, 2 g_i \leq g_{i-1} + g_{i+1} \}.$$ This problem is
classical, and several efficient algorithm have been proposed to solve
it. Since the number of conic constraints is lower than the dimension
of the ambient space ($n+1$), the number of extreme rays of the
polyhedral cone $\H_1^n$ is bounded by $n$. In this case coincide with
the edges of the cone. Moreover, as noted by Meyer
\cite{meyer2010algorithm}, these extreme rays can be computed
explicitly. This remark allows one to parameterize the cone $\H_1^n$
by the space $\Rsp\times \Rsp_+^{n}$, thus recasting the projection
onto $\H_1^n$ into a much simpler non-negative least squares
problem. To solve this problem, we use the simple and efficient exact
active set algorithm proposed by Meyer\,\cite{meyer2010algorithm}. In
our implementation, we reuse the active set from one proximal
computation to the next one. This improves the computation time by up
to an order of magnitude.

\section{Application I : Denoising \label{denoising-section}}
Our first numerical application focuses on the $L^2$ projection onto
the set of convex functions on a convex domain. We illustrate the
efficiency of our relaxed approach in the context of denoising.  Let
$u^*$ be a convex function on a domain $X$ in $\Rsp^d$. We approximate
this function by a piecewise linear function on a mesh, and the values
of the function at the node of the mesh are additively perturbed by
Gaussian noise: $u_0(p) = u^*(p) + c {\mathcal N}(0,1)$, where
${\mathcal N}(0,1)$ stands for the standard normal distribution and
$c$ is a small constant. Our goal is then to solve the following
projection problem in order to estimate the original function $u^*$:

$$\min_{u \in \H} \nr{u - u_0}_{L^2(X)}.$$

\begin{figure}
\centering
\begin{tabular}{r r }
\includegraphics[height=\widthh cm]{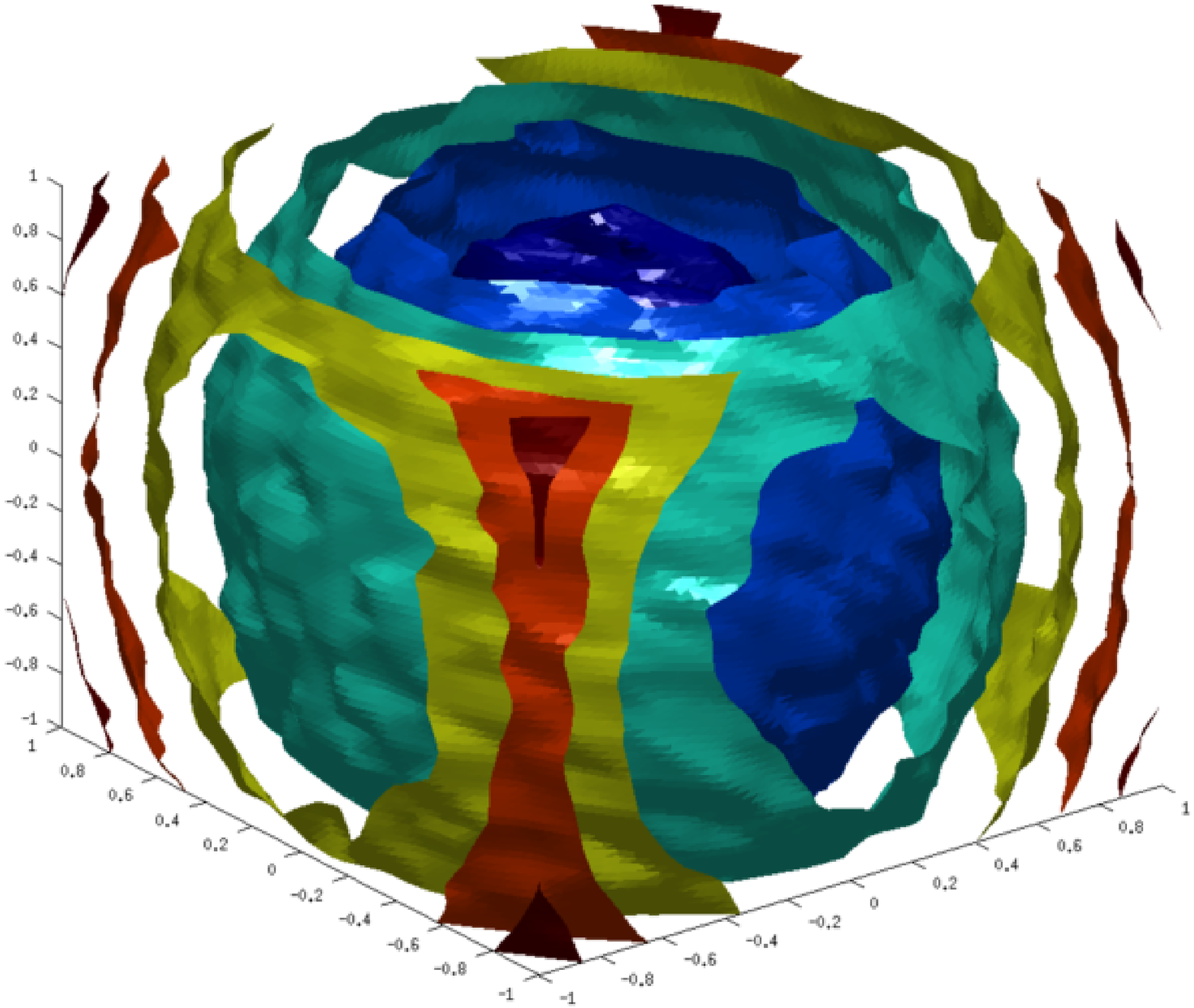}
&\includegraphics[height=\widthh cm]{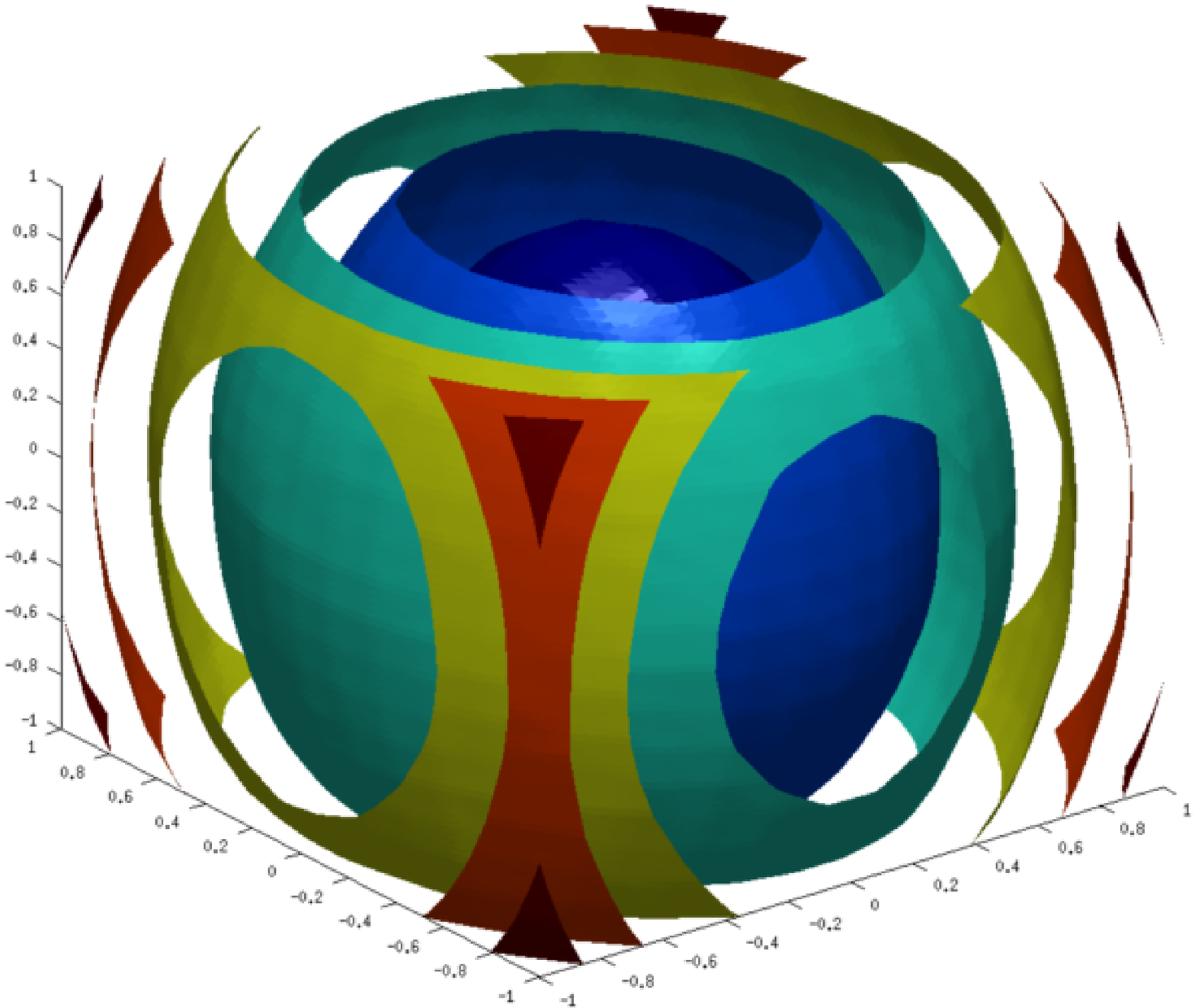}\\
\end{tabular}
\caption{Denoising a convex graph by one dimensional projections.}
\label{fig:conv1}
\end{figure}
 
As described in previous sections, our discretization of the space of
convex functions is not interior. However, thanks to Theorem
\ref{th:dh}, we obtain a converging discretization process that uses
fewer constraints than previously proposed interior approaches.  More
explicitly, we illustrate below our method on the following
three-dimensional denoising setting.  Let $u_0(x,y,z) = \frac{x^2}{3}
+ \frac{y^2}{4} + \frac{z^2}{8}$, $X = [-1,1]^3$ and set $c =
\frac{1}{40}$. We carried our computation on a regular grid made of
$80^3$ points and we look for an approximation in the space of
piecewise-linear functions. The parameter used to discretize the
convexity constraints is set to $\eps=0.02$. Figure~\ref{fig:conv1}
displays the result of the SDMM algorithm after $10^4$ iterations. This computation took less than five minutes on a standard computer.

To illustrate the versatility of the method, we performed the same
denoising experience in the context of support functions, using the
discretization explained in Section~\ref{sec:generalization}. As in
the previous example, we consider a support function perturbed by
additive Gaussian noise $\hh_0(p) = \hh^*(p) + c {\mathcal
  N}(0,1)$. In the numerical application, $\hh^*$ is the support
function of the unit isocaedron and $c=0.05$, as shown on the left of
Figure~\ref{fig:ico}. Our goal is to compute the projection of
$\hh_0$ to the space of support functions:
$$\min_{\hh \in \mathcal \H^s} \nr{\hh - \hh_0}_{L^2( \Sph^{d-1})}^2.$$
In order to relax the constraint $\H^s$, we imposed one dimensional
constraints on a family of $2000$ great circles of $\Sph^{d-1}$
uniformly distributed and a step discretization of every circular arc
equal to $0.02$. We obtained a very satisfactory reconstruction of
$\hh_i$ after $10^4$ iterations of the SDMM algorithm, as displayed on
the right of Figure~\ref{fig:ico}.

\begin{figure}[t]
\centering
\includegraphics[width=6cm]{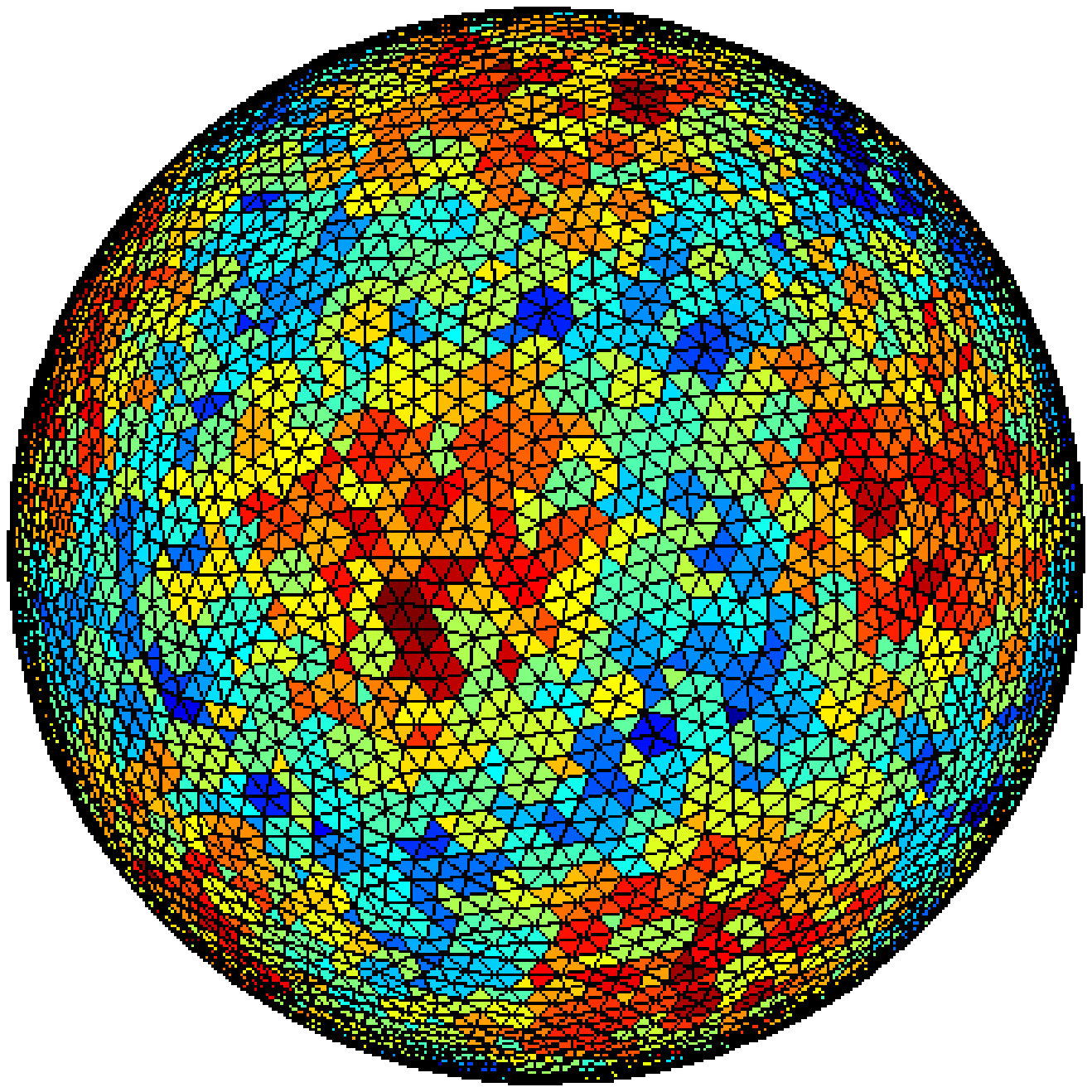}\hfill
\includegraphics[width=6cm]{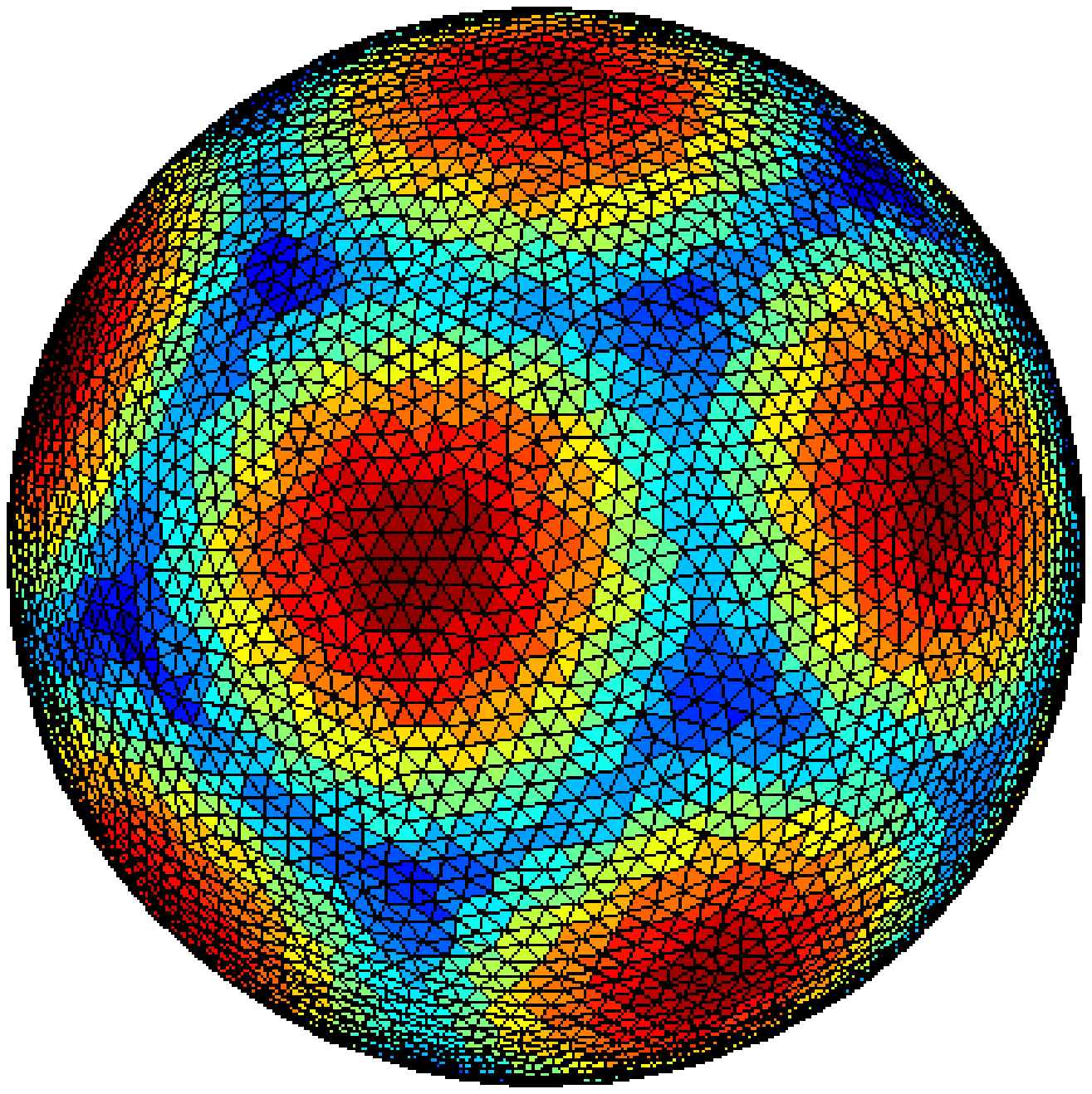}
\caption{Denoising the support function of a convex body. On the left the perturbed support function of the icosaedron. On the right its projection into the set of support functions.}
\label{fig:ico}
\end{figure}

\section{Application II : Principal-agent problem}
\label{pagent_section}
The principal-agent problem formalizes how a monopolist selling
products can determine a set of prices for its products so as to
maximize its revenue, given a distribution of customer -- the
agents. We describe the simplest geometric version of this problem in
the next paragraph. Various instances of this problem are then used as
numerical benchmarks for our relaxed convexity constraints.

\subsection{Geometric principal agent problem}
Let $X$ be a bounded convex domain of $\Rsp^d$, a distribution of
agent $\rho: X \to \Rsp$ and a finite subset $K\subseteq
X$. The \emph{monopolist} or \emph{principal} needs to determine
a \emph{price menu} $\pi$ for pick-up or deliveries, so as to maximize
its revenue. The principal has to take into account the two following
constraints: (i) the agents will try to maximize their utility and
(ii) there is a finite subset $K\subseteq X$ of facilities, that
compete with the principal and force him to set its price $\pi(y)$ to
zero at any $y$ in $K$.  For a given price menu $\pi$, the utility of
a location $y$ for an agent located at a position $x$ in $X$ is
given by $u_\pi(x,y) = - \frac{1}{2} \nr{x-y}^2 - \pi(y)$. The fact
that each agent tries to maximize his utility means that he will
choose a location that balances closeness and price. The maximum
utility for an agent $x$ is given by:
$$u_\pi(x) := \max_{y\in X} u(x,y) = - \frac{1}{2}\nr{x}^2 +
\max_{y \in X} \left[\sca{x}{y} - \frac{1}{2}\nr{y}^2 -
  \pi(y)\right]$$ Let us denote $\bar{u}_\pi(x)$ the convex function
$u_\pi(x) + \frac{1}{2} \nr{x}^2$. This function is differentiable
almost every point $x$ in $X$, and at such a point the gradient
$\nabla \bar{u}_\pi(x)$ agrees with the best location for $x$, i.e. 
$\nabla \bar{u}_\pi(x) = \arg\max_{y} u(x,y)$.
This implies the following equality:
$$\bar{u}_\pi(x) = \sca{x}{\nabla \bar{u}_\pi(x)} - \frac{1}{2}\nr{\nabla
\bar{u}_\pi(x)}^2 - \pi(\nabla \bar{u}_\pi(x))$$

Our final assumption is that the cost of a location for the principal
is constant. Our previous discussion implies that the total revenue of
the principal, given a price menu $\pi$, is computed by the following
formula
\begin{align}
R(\pi) &= \int_{X} \pi(\nabla \bar{u}_\pi(x)) \rho(x) \dd x
\notag \\ &= -\int_{X} \left[ \bar{u}_\pi(x) - \sca{x}{\nabla
    \bar{u}_\pi(x)} + \frac{1}{2}\nr{\nabla \bar{u}_\pi(x)}^2 \right] \rho(x) \dd x
\end{align}
Changing the unknown from $\pi$ to $v := \bar{u}_\pi$, the assumption
that the price vanishes on the set $K$ translates as $u_\pi\geq
\max_{y\in K} - \frac{1}{2} \nr{\cdot-y}^2$ or equivalently
$$v(x) = \bar{u}_\pi(x)\geq \max_{y\in K} \sca{x}{y} -
\frac{1}{2}\nr{y}^2.$$ Thus, we reformulate  the principal's problem
in term of $v$ as the minimization of the following functional:
\begin{equation}
 L(v) := \int_{X} \left[ v(x) + \frac{1}{2}\nr{\nabla v(x) -
    x}^2 \right] \rho(x) \dd x
\label{eq:geo}
\end{equation} where the maximum is taken over the
set of convex functions $v:X\to\Rsp$ that satisfy the lower bound
$v \geq \max_{y\in K} \sca{.}{y} - \frac{1}{2}\nr{y}^2$.

\subsection{Numerical results} We present three numerical
experiments. The first one concerns a linear variant of the
principal-agent problem. The second and third one concern the
geometric principal-agent problem presented above: we maximize the
functional $L$ of Equation~\eqref{eq:geo} over the space of
non-negative convex functions, with $X = \B(0,1)$ and $X=[1,2]^2$
respectively, $\rho$ constant and $K = \{(0,0)\}$.

\begin{table}
\centering
\begin{tabular*}{\textwidth}{c @{\extracolsep{\fill}} ccccc}
\# points & $\eps$   &  $|L - L_{\mathrm{opt}}|$ & $\nr{u - u_{\mathrm{opt}}}_\infty$  & CPU  \\ 
\hline
$30\times 30$ & $0.06$ & $1.3\cdot 10^{-3}$  & $2 \cdot 10^{-3}$  & 11s  \\ 
$60\times 60$ & $0.03$ & $9.8\cdot 10^{-4}$  & $1.6 \cdot 10^{-3}$  & 251s  \\ 
$90\times 90$ & $0.02$ & $9.7\cdot 10^{-4}$  & $1.1 \cdot 10^{-3}$  & 500s  \\ 
\end{tabular*}
\smallskip
  \caption{Convergence of numerical approximations for the geometric principal-agent problem
 (radial case).\label{table-pagentradial}}
\end{table}

\begin{table}
\centering
\begin{tabular*}{\textwidth}{c @{\extracolsep{\fill}} ccccc}
\# points & $\eps$   &  $|M - M_{\mathrm{opt}}|$ & $\nr{u - u_{\mathrm{opt}}}_\infty$  & CPU  \\ 
\hline
$900$ & $0.06$ & $8\cdot 10^{-5}$  & $1.15 \cdot 10^{-2}$  & 11s  \\ 
$3600$ & $0.03$ & $8\cdot 10^{-5}$  & $1.00 \cdot 10^{-2}$  & 28s  \\ 
$8100$ & $0.02$ & $4.3\cdot 10^{-5}$  & $8.46 \cdot 10^{-3}$  & 87s  \\ 
\end{tabular*}
\smallskip
\caption{Convergence of numerical approximations for the linear principal-agent.\label{table-pagentlinear}\label{tab:linear}}
\end{table}

\subsubsection{Linear principal agent.} As a first benchmark, we
consider a variant of the principal-agent problem where the minimized
functional is linear in the utility function $u$ \cite{manelli2007multidimensional}. The goal is
to minimize the following functional
$$ M(u) := \int_{X} (u(x)-\sca{\nabla u}{x})\rho(x)\dd x, $$
where $X=[0,1]^2$ and $\rho = 1$, over the set of convex functions
whose gradient is included in $[0,1]^2$. The solution to this problem
is known explicitely:
$$u_{\mathrm{opt}}(x_1,x_2) = \max\{0,x_1-a,x_2-a,x_1+x_2-b\}$$
where $a = 2/3$ and $b=(4-\sqrt{2})/3$. We solve the linear
principal-agent problem on a regular grid meshing $[0,1]^2$, and
compare it to the exact solution on the grid
points. Table~\ref{tab:linear} displays the numerical results for
various grid sizes and choices of $\eps$.

\subsubsection{Geometric principal agent, radial case} In order to
evaluate the accuracy of our algorithm, we first solve the
(non-linear) geometric principal-agent problem on the unit disk, with
$K = \{(0,0)\}$ and $\rho$ constant. The optimal profile is radial in
this setting, and one can obtain a very accurate description of the
optimal radial component by solving a standard convex quadratic
programming problem. In parallel, we compute an approximation of the
$2$D solution on an unstructured mesh of the disk. On the left of
Figure \ref{fig:pagent}, we show that our solution matches the line of
the one dimensional profile after $10^3$ iterations of the SDMM
algorithm, for $\delta = 0.12$ and $\eps = 1/50$. Table
\ref{table-pagentradial} shows the speed of convergence of our method,
both in term of computation time and accuracy, with $10^3$ iterations.

\subsubsection{Geometric principal agent, Rochet-Choné case} We
recover numerically the so-called bunching phenomena predicted by
Rochet and Choné \cite{rochet1998ironing} when $X=[1,2]^2$, $\rho$ is
constant and $K=\{(0,0)\}$, thus confirming numerical results from
\cite{ekeland2010algorithm,mirebeau2013,aguilera2008approximating}.
On the right of Figure \ref{fig:pagent}, we show the numerical
solution defined on a regular mesh of the square of size $60 \times
60$, with $\eps = 0.02$. In this computation, the interpolation
operator is constructed using P$3$ finite elements, so as to
illustrate the flexibility of our method.

\begin{figure}
\centering
\includegraphics[width=6.3cm]{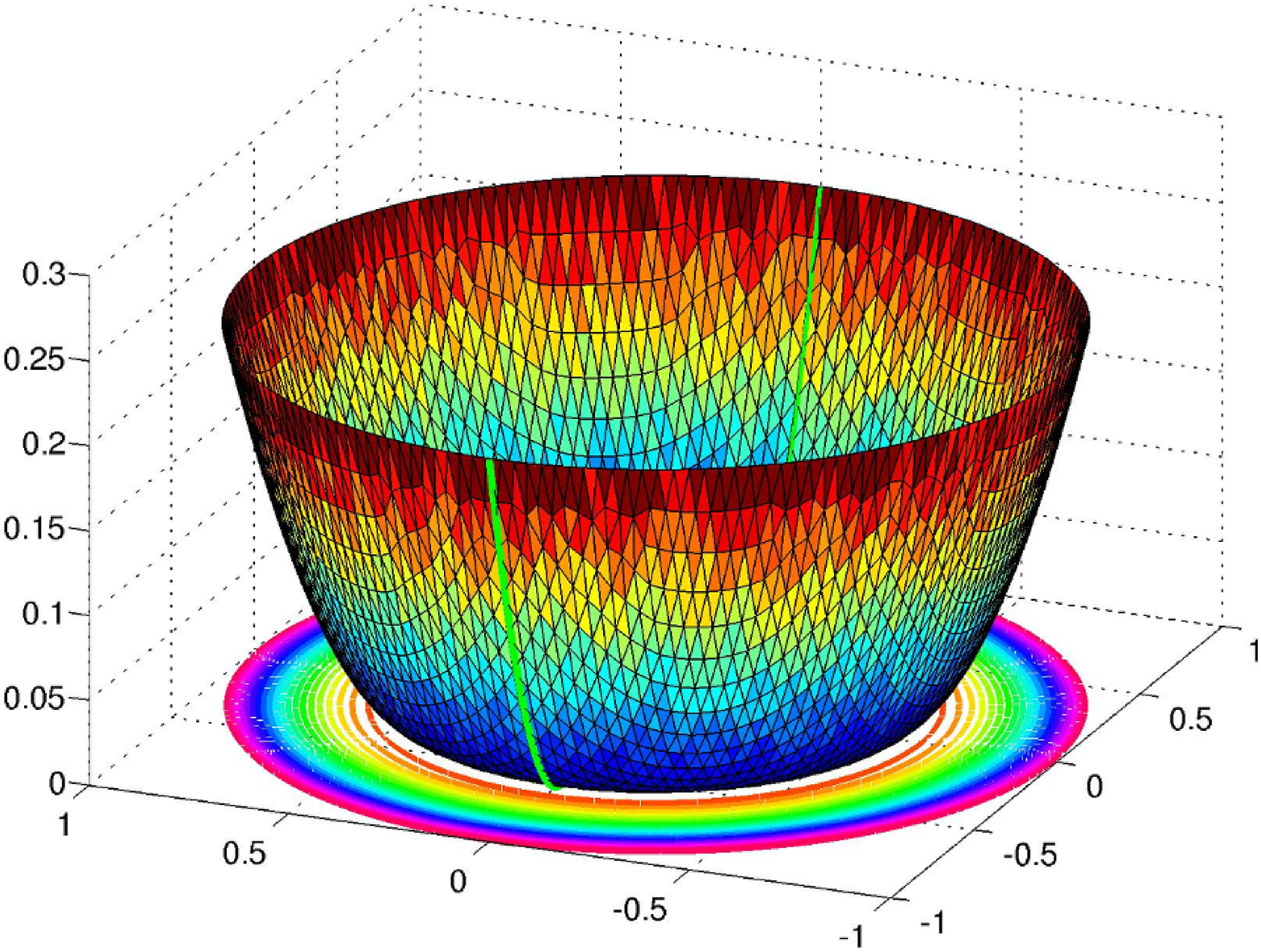}\hfill
 \includegraphics[width=6.3cm]{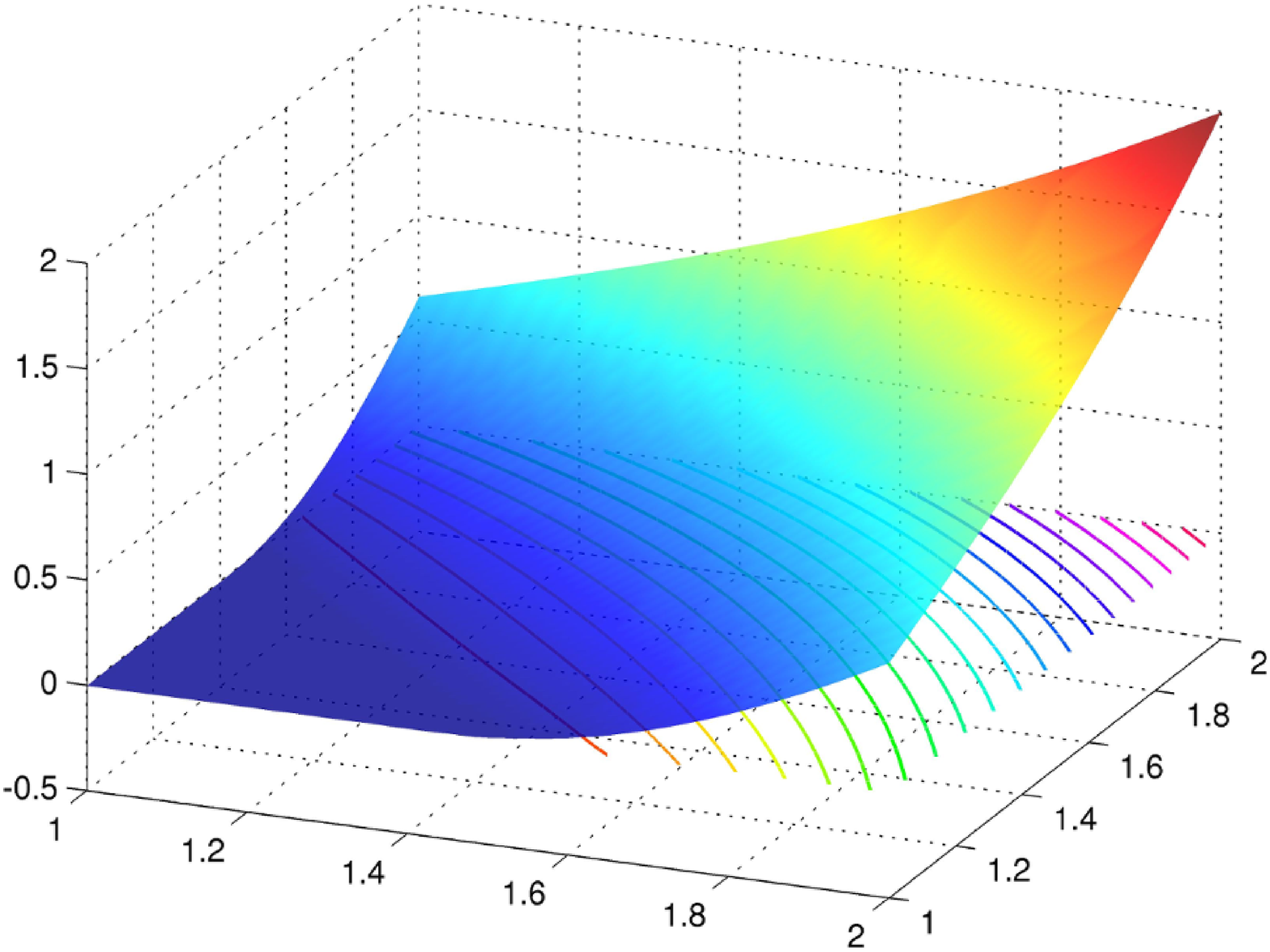} 
 \caption{\emph{(Left)} Numerical approximation to the principal-agent
   problem on $X=\B(0,1)$, with $\delta = 1/60$ and $\eps = 1/50$. The
   profile of the $1$D solution is reported as a bold line on the
   graph. \emph{(Right)} Numerical approximation to the
   principal-agent problem with $X=[1,2]^2$ by P$3$ finite elements.}
\label{fig:pagent}
\end{figure}

\section{Application III : Closest convex set with constant width}
\label{sec:constant}

A convex compact set $K$ of $\Rsp^d$ has constant width $\alpha>0$ if
all its projection on every straight line are segments of length
$\alpha$. This property is equivalent to the following constraints on
the support function of $K$ :
\begin{equation}
  \label{widthconstr}
\forall \nu \in  \Sph^{d-1},\, \hh_K(\nu)+\hh_K(-\nu)=\alpha.
\end{equation}
Surprisingly, balls are not the only bodies having this property. In
dimension two for instance, Reuleaux's triangles, which are obtained
by intersecting three disks of radius $\alpha$ centered at the
vertices of an equilateral triangle have constant width
$\alpha$. Moreover, Reuleaux's triangles have been proved by Lebesgue
and Blaschke to minimize the area among two-dimensional bodies with prescribed constant width.

In dimension three, this problem is more difficult. Indeed the mere
existence of non trivial three-dimensional bodies of constant width is
not so easy to establish. In particular, no finite intersection of
balls has constant width, except balls themselves
\cite{oudet2007bodies}. As a consequence and in contrast to the two
dimensional case, the intersection of four balls centered at the
vertices of a regular simplex is not of constant width. In 1912,
E. Meissner described in \cite{meissner3} a process to turn this
spherical body into an asymmetric bodies with constant width, by
smoothing three of its circular edges.  This famous body is called
``Meissner tetrahedron'' in the literature
\cite{kawohl2011meissner}. It is suspected to minimize the volume
among three dimensional bodies with the same constant width. Let us
point out that Meissner construction is not canonical in the sense
that it requires the choice the set of three edges that have to be
smoothed. As a consequence, there actually exists two kinds of
``Meissner tetrahedron'' having the same measure.

In these two constructions, the regular simplex seems to play a
crucial role in the optimality (see also \cite{jin2012asymmetry} for a
more rigorous justification of this intuition). It is therefore
natural to search for the body with constant width that is the closest
to a regular simplex. In an Hilbert space, the projection on a convex
set is uniquely defined. Thus, the Meissner tetrahedra cannot be
obtained as projections of a regular simplex to the convex set
$\H^s\cap \mathcal{W}$ with respect to the $\LL^2$ norm between
support functions. Such an obstruction does not hold for the $\LL^1$
and $\LL^\infty$ norm, which are not strictly convex.  We illustrate
below that our relaxed approach can be used to numerically investigate
these questions. The optimization problem that we have to approximate
is 
$$\min_{\hh \in \H^s \cap \mathcal{W}} \nr{\hh_0 -
  \hh}_{\LL^p(\Sph^{2})},~1\leq p \leq \infty$$ where $\mathcal{W}$ is
the set of function of $\Sph^{2}$ which satisfy the width
constraints~\eqref{widthconstr}.

As explained is Section~\ref{sec:generalization}, we relax the
constraint of being a support function, by imposing convexity-like
conditions on a finite family of great circles of the sphere. In the
experiments presented below the number of vertices in our mesh of
$\Sph^{2}$ is $5000$. We choose a family of $2000$ great circles of
$\Sph^{2}$ uniformly distributed (with respect to their normal
direction) and a step discretization of every circular arc equal to
$0.02$. Finally, the constant width constraint $\mathcal{W}$ is
approximated by imposing that antipodal values of the mesh must
satisfy a set of linear equality constraints, which can be easily
implemented in the proximal framework depicted in Section~\ref{prox}.
Note that in this first experience, the value of the width constraint
is not imposed.

\begin{figure}[t]
\centering
\label{fig:cw}
\begin{tabular}{r r r}
\includegraphics[height=\widthhs cm]{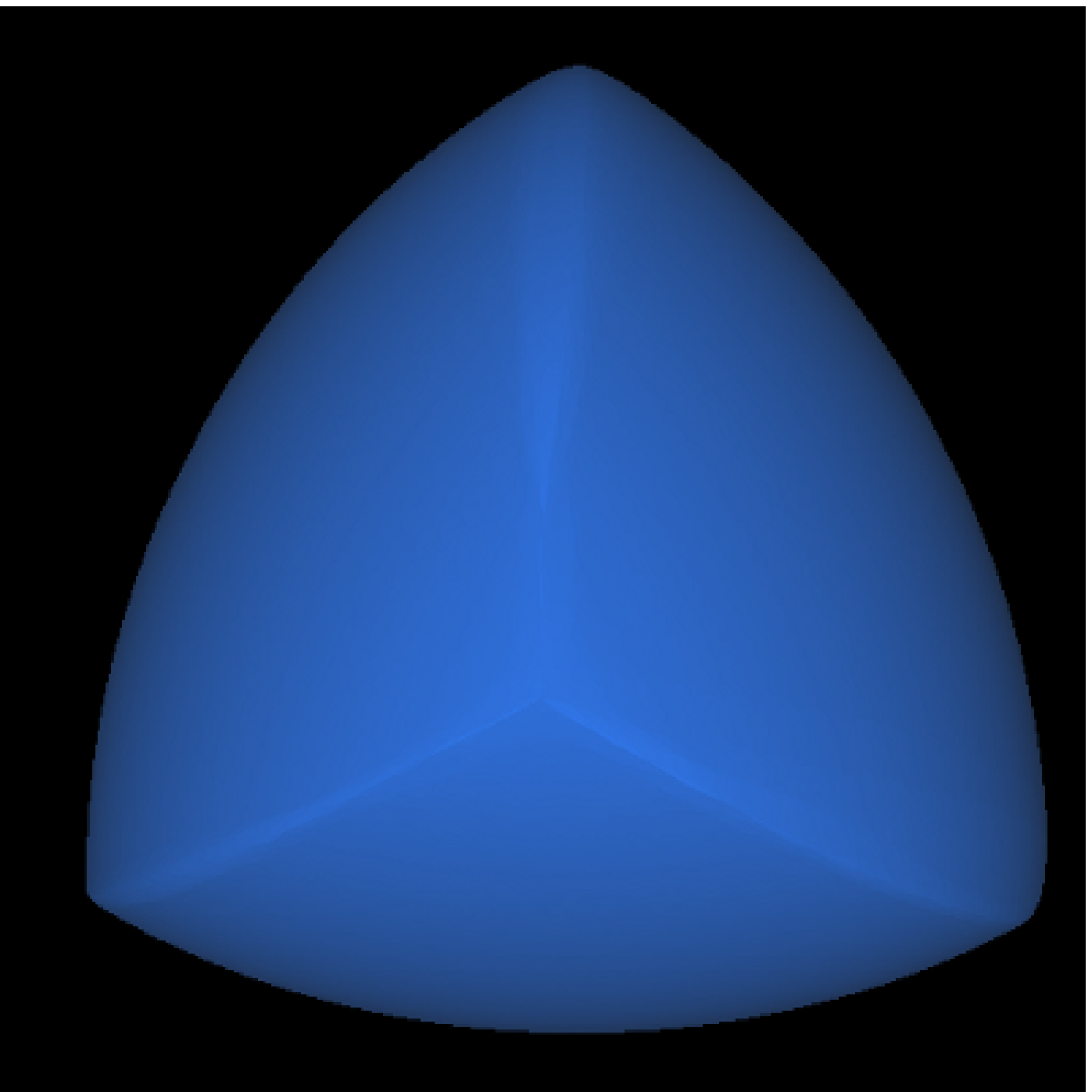}
&\includegraphics[height=\widthhs cm]{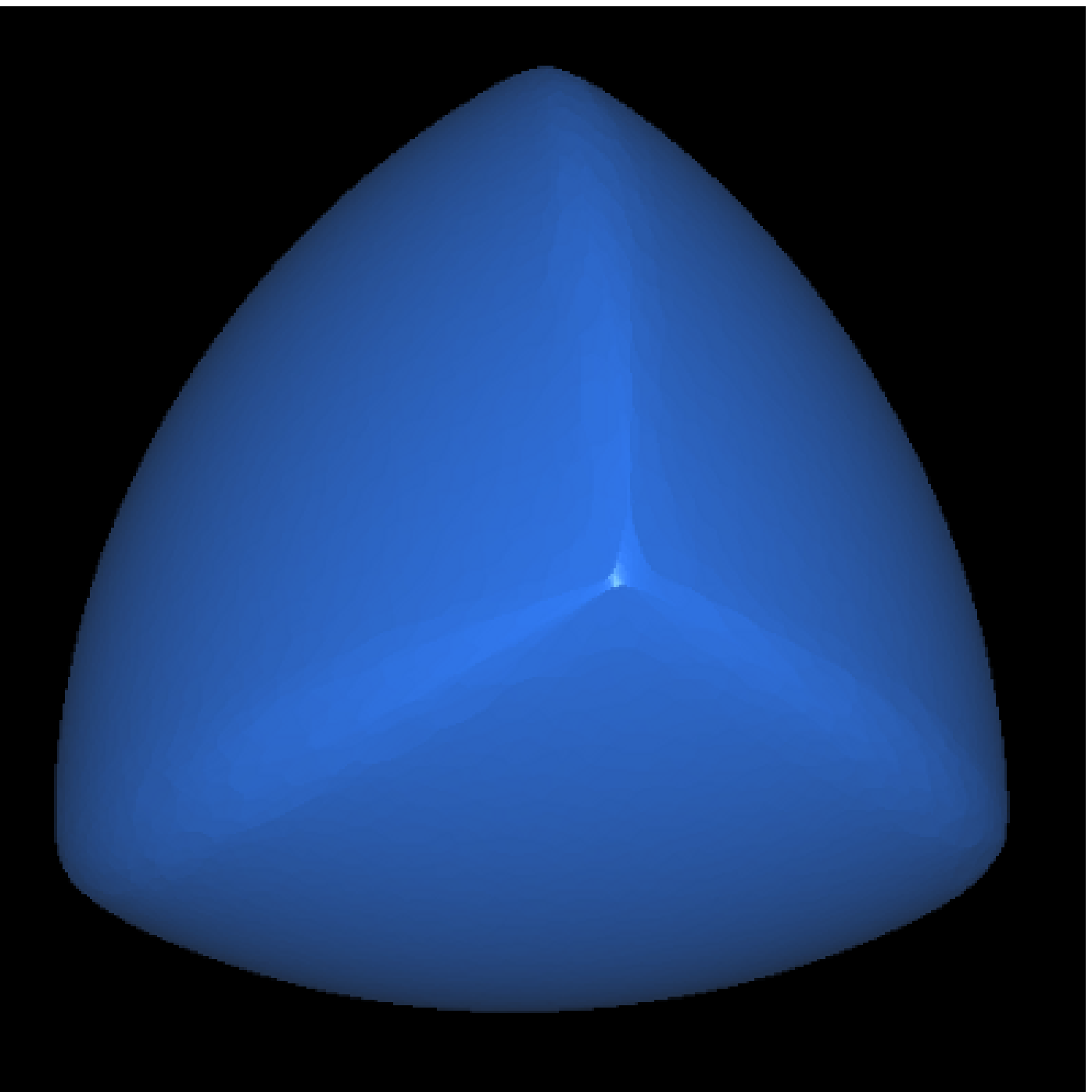}
&\includegraphics[height=\widthhs cm]{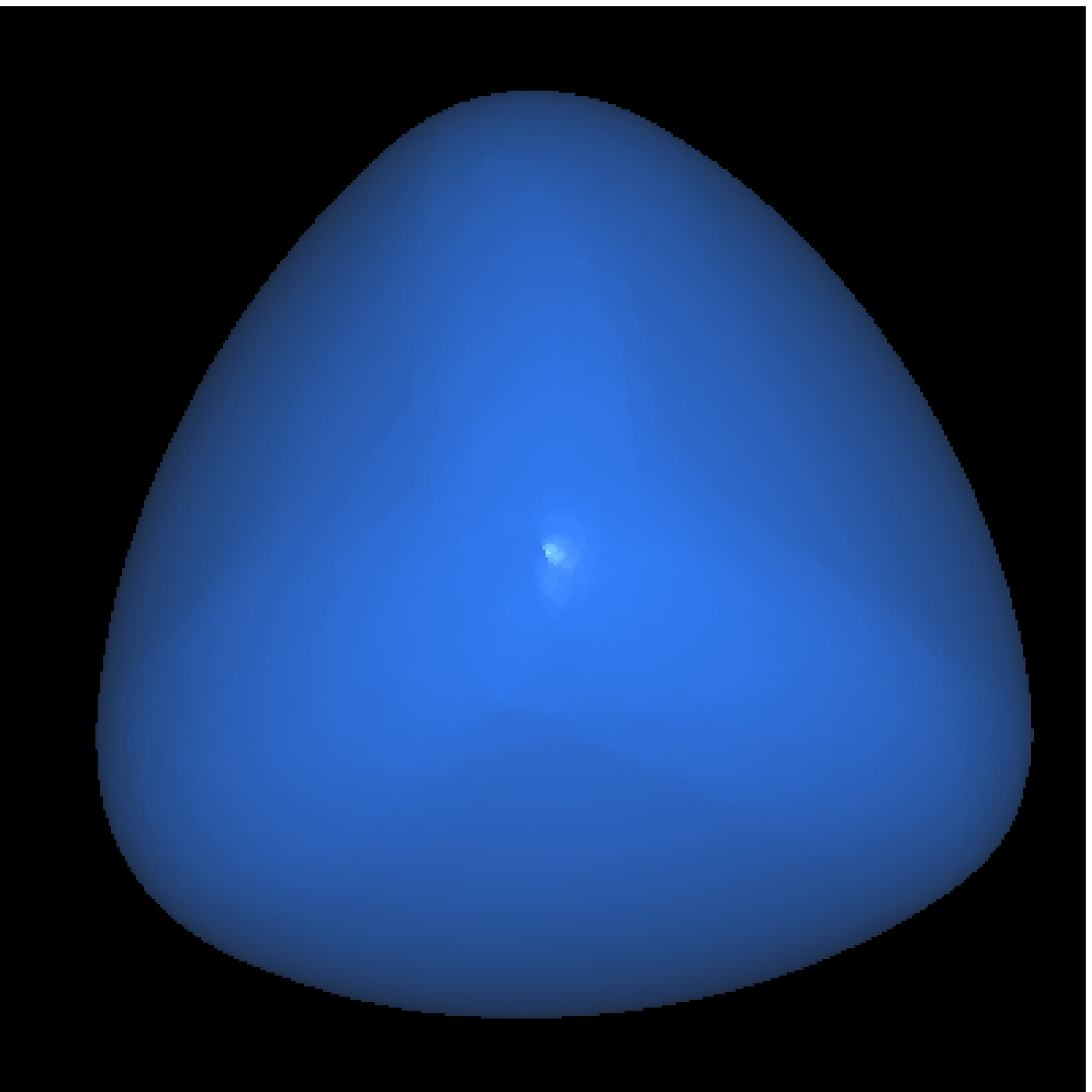}\\
\end{tabular}
\caption{Reconstruction of the convex bodies associated to the $\LL^1$, $\LL^2$ and $\LL^\infty$ projection of $\hh_S$ without prescribing the width value.}
\end{figure}

\begin{table}
\label{table-result_projs}

\begin{tabular}{c|c|c|c|c}
                                             & Surface    & Volume        & Width    & Relative width error   \\ 
$\LL^1$ projection of $\hh_S$      & 2.6616          & 0.36432       & 0.951    &  < 0.001                \\ 
$\LL^2$ projection of $\hh_S$      & 2.5191          & 0.34312       & 0.920    &  < 0.003                \\ 
$\LL^\infty$ projection of $\hh_S$  & 2.1351          & 0.28081       & 0.835    &  < 0.001   
\end{tabular}
\smallskip

\caption{Numerical results for the projections of $\hh_S$}
\vspace{-.5cm}
\end{table}

We present in Table~\ref{table-result_projs} and Figure~\ref{fig:cw},
our numerical description of the projections of the support function
of a regular simplex in the set of support function of constant width
bodies for the $\LL^1$, $\LL^2$ and $\LL^\infty$ norms. One can
observe that the resulting support functions describe a body with
constant width within an error of magnitude $0.1 \%$. In other words
the gap between the minimal width and the diameter is relatively less
than $0.001$. In the $L^1$ case we obtain a convex body whose surface
area and volume are close to those of a Meissner body of same width,
within a relative error of less than $0.01$.  We also performed the
same experiment starting from the support functions of others platonic
solids. For any of these other solids, and when the value of the width
is not imposed, the closest body with constant seems to always be a
ball.

\bibliographystyle{amsplain}
\bibliography{projconvex}

\end{document}